\begin{document}
\title[limiting behavior]
{Limiting behavior of solutions for Euler equations of compressible fluid flow}

\author[ Sahoo and Sen]
{ Manas Ranjan Sahoo and Abhrojyoti Sen}

\address{ Manas Ranjan Sahoo and Abhrojyoti Sen \newline
School of Mathematical Sciences\\
National Institute of Science Education and Research, HBNI, Bhubaneswar\\
Jatni, Bhimpur- Padanpur, Khurda\\
Odisha-752050, India}
\email{manas@niser.ac.in, abhrojyoti.sen@niser.ac.in}

\thanks{Submitted}
\subjclass[2010]{35L67, 35L65}
\keywords{Euler equation; Riemann problem; Delta waves}

\begin{abstract}
  We study the limiting behavior of the solutions of  Euler equations of one-dimensional compressible fluid flow as the pressure like term vanishes. This system can be thought of as an approximation for the one dimensional model for large scale structure formation of universe.  We show that the solutions of former equation converges to the solution of later in the sense of distribution and agrees with the vanishing viscosity limit when the initial data is of Riemann type. A different approximation for the one dimensional model for large scale structure formation of universe is also studied.
\end{abstract}

\maketitle
\numberwithin{equation}{section}
\numberwithin{equation}{section}
\newtheorem{theorem}{Theorem}[section]
\newtheorem{remark}[theorem]{Remark}
\newtheorem{lem}[theorem]{Lemma}

\section{Introduction}
 This article is an attempt to establish a connection between the solutions of  two well known equations. One of them is called as Euler equation of one-dimensional compressible fluids, which is an example of strictly hyperbolic system where as the other one is a non strictly hyperbolic system, called one dimensional equation of large scale structure formation of universe.\\
 Euler equation of one-dimensional compressible fluid flow reads
\begin{equation}
\begin{aligned}
u_t +(\frac{u^2}{2}+ P(\rho))_x &=0\\
\rho_t +(\rho u)_x&=0,
\end{aligned}
\label{e1.3}
\end{equation}
with the initial condition

\begin{equation}
u(x,0)=u_{0}(x),\,\, \rho(x,0)=\rho_0 (x).
\label{e1.2}
\end{equation}
The equation \eqref{e1.3} was first derived by S. Earnshaw \cite{Ea, Wh} for isentropic flow. It is a scaling limit system of a Newtonian dynamics with 
long range interaction for a continuous distribution of mass \cite{Oe1, Oe2}. This equation is also hydrodynamic limit of Vlasov equation \cite{Cemp}.\\
We take
\begin{equation*} 
P(\rho)=\int_{0}^{\rho}{\frac{q^{'}(\xi)}{\xi}} d\xi
\end{equation*}
and intend to do all the analysis when $q$ is defined in the form 
\begin{equation*}
q(\rho)=\int_{0}^{\rho}{\xi}^2 \exp(\xi) d\xi.
\end{equation*}
The  existence viscosity solution of \eqref{e1.3} with initial data $\rho_0(x) > 0$  by parabolic regularization was shown in \cite{lu1} and the large data existence of global weak solutions with locally finite total variation for \eqref{e1.3} with \eqref{e1.2} was  by DiPerna \cite{di1} for some general pressure function , say $p(\rho)=k^2\rho^\gamma,  \gamma \in (1,3)$.\\
Now following \cite{Li1}, in our present work we consider the scalar function $P$ is not only a function of density $\rho$ but also a small parameter $\epsilon > 0$ satisfying
\begin{equation*}
\lim_{\epsilon\rightarrow 0} P(\rho,\epsilon)=0
\end{equation*}
and we redefine $P(\rho,\epsilon)$ as
\begin{equation}
P(\rho,\epsilon)=\epsilon p(\rho)
\label{1.6}
\end{equation}
where $p(\rho)$ is defined as before,
\begin{equation*} 
p(\rho)=\int_{0}^{\rho}{\frac{q^{'}(\xi)}{\xi}} d\xi
\end{equation*}
At this point system \eqref{e1.3} can be expressed as
\begin{equation}
\begin{aligned}
u_t +(\frac{u^2}{2}+ \epsilon p(\rho))_x &=0\\
\rho_t +(\rho u)_x&=0.
\end{aligned}
\label{e1.8}
\end{equation}
One can readily see that as $\epsilon\rightarrow 0$ formally the system \eqref{e1.8} becomes  
\begin{equation}
\begin{aligned}
u_t +(\frac{u^2}{2})_x &=0,\,\,\, x\in \mathbb{R}, t>0\\
\rho_t +(\rho u)_x&=0,\,\,\, x\in \mathbb{R}.
\end{aligned}
\label{e1.1}
\end{equation}

The above equation is a one dimensional model for the large scale structure formation of universe, see \cite{z1}. Note that the system \eqref{e1.8} can also be viewed as a  strictly hyperbolic approximation of system \eqref{e1.1}. On the other hand, one can perturb the flux function of the first equation of system \eqref{e1.1} with \eqref{1.6} to make it strictly hyperbolic.  So in the rest of our article the term "perturbed problem" means system \eqref{e1.8}.\\
From the viewpoint of hyperbolic conservation laws the  limit system \eqref{e1.1} loses strict hyperbolicity and does not have weak solution in BV- class \cite{k1, j2,  ma1, ma2}. Exact solution of the the system \eqref{e1.1} with the initial data \eqref{e1.2} was studied  by many authors \cite{j2, ma1, ma2, p1}. A different approach towards the solution of \eqref{e1.1} in the sense of Colombeau \cite{co1,ob2} can be found in \cite{o1}.
The solution for the first equation in \eqref{e1.1} is well understood   in the distributional sense \cite{h1}, whereas the solution for the second equation does not belong to the space of BV functions. In fact the second component contains $\delta$-measures. 
So one cannot expect that the product $\rho u$  can be defined in the usual sense. This is taken care of using Volpert superposition, see \cite{v1}. Moreover, non-conservative products are also discussed in \cite{le1, d1}. 

In this paper we want to determine the distributional limit of the solutions of \eqref{e1.8} when the initial data \eqref{e1.2}  are of Riemann type, i.e, 
\begin{equation}
\begin{pmatrix}
         u_0 (x)   \\
            \rho_0 (x) \\
         \end{pmatrix}
   =
\begin{cases}
\begin{pmatrix}
         u_l  \\
            \rho_l \\
         \end{pmatrix},\,\,\,\,\,\,\, \textnormal{if}\,\,\,\,\,\,\  x<0\\
 \begin{pmatrix}
         u_r   \\
            \rho_r \\
         \end{pmatrix},\,\,\,\,\,\,\, \textnormal{if}\,\,\,\,\,\,\  x>0.
\end{cases}
\label{e1.4}
\end{equation}
It turns out that this limit is a solution for \eqref{e1.1} and agrees with vanishing viscosity limit \cite{j2}. So we attempt a different approach to find measure valued solution of the system \eqref{e1.1} by passing to the limit as $\epsilon\rightarrow 0$ in the solution of an existing strictly hyperbolic model.  The theory is well developed \cite{ba1,da1} for strictly hyperbolic system  and can be used to solve \eqref{e1.8}. This kind of approach has been extensively used in the theory of isentropic gas dynamics (\cite{Li1},\cite{n1} and the references therein).\\
The paper finishes with another approximation, by adding $\epsilon>0$ in the flux function, which looks simpler than the previous one. The limit of solutions has been explored which works quite well for the rarefaction case. For the shock case the approximation of system \eqref{e1.8} can not be solved in BV class for
all types of Riemann data. Delta-waves are introduced to such cases by properly defining $u$ along the discontinuity curve. Note that this is not the usual Volpert superposition \cite{v1} and  its limit agrees with vanishing viscosity limit.\\
Our paper is organized as follows. In section 2, shock and rarefaction curves are described for system \eqref{e1.8} and dependence of the Riemann solution on $\epsilon>0$ is examined. In section 3, shock-waves are constructed  for \eqref{e1.8}-\eqref{e1.4} when $u_l > u_r$  and the limit is obtained whenever the perturbation vanishes. In section 4, entropy-entropy flux pairs are found for perturbed model \eqref{e1.8} and limit is investigated for small $\epsilon$. Section 5 contains the solutions by other elementary waves. Finally, in section 6, we discuss another approximation mentioned above.
\section{The Riemann solution}
The co-efficient matrix $A(u,\rho)$  of  the equation \eqref{e1.8}  is  given by 
\begin{equation*}
A(u,\rho)=\quad
\begin{pmatrix}
u & {\epsilon}p'(\rho) \\
\rho & u
\end{pmatrix}.
\quad  
\end{equation*}
Eigenvalues for this co-efficient matrix are the following:
$\lambda_1(u,\rho)=u-\sqrt{{\epsilon}p'(\rho)\rho}$  and   $\lambda_2(u,\rho)=u+\sqrt{{\epsilon}p'(\rho)\rho}$  and the eigenvectors to $\lambda_1$ and $\lambda_2$ are $X_1=(-\sqrt{\frac{{\epsilon}p'(\rho)}{\rho}},1)$ and $X_2=(\sqrt{\frac{{\epsilon}p'(\rho)}{\rho}},1)$ respectively and  $\nabla\lambda_i . X_i\neq 0$ for $i=1,2$.\\
Each characteristics field is genuinely nonlinear for problem \eqref{e1.8}. \\\\
\textbf{Shock curves}: The shock curves $s_1$,$s_2$ through $(u_l,\rho_l)$ are derived from the Rankine-Hugoniot conditions
\begin{equation}
\begin{aligned}
\lambda(u-u_l)=&(\frac{u^2}{2}+\epsilon p(\rho))-(\frac{u_l^2}{2}+\epsilon p(\rho_l))\\
\lambda(\rho-\rho_l)=&\rho u-\rho_l u_l.
\label{2.1}
\end{aligned}
\end{equation}
Eliminating $\lambda$ from \eqref{2.1}, shock curves are computed as
\begin{equation}
s_1=\big\{(u,\rho):(u-u_l)^2\frac{(\rho+\rho_l)}{2}=\epsilon(\rho-\rho_l)(p(\rho)-p(\rho_l)),\,\,\, \rho > \rho_l\big\}
\end{equation}
\begin{equation}
s_2=\big\{(u,\rho):(u-u_l)^2\frac{(\rho+\rho_l)}{2}=\epsilon(\rho-\rho_l)(p(\rho)-p(\rho_l)),\,\,\, \rho < \rho_l\big\}
\end{equation}\\
\textbf{Rarefaction curves}: The Rarefaction curves $R_1$, $R_2$ passing through $(u_l,\rho_l)$ are the following :\\
\textit{1- Rarefaction curve}: First Rarefaction curve passing through $(u_l,\rho_l)$ is derived by solving;
\begin{equation}
\frac{du}{d\rho}=-\sqrt{\frac{{\epsilon}p'(\rho)}{\rho}},\,\,\,\,\,\,\,\,\,\,\, u(\rho_l)=u_l.
\end{equation}
\begin{equation}
R_1=\big\{(u,\rho):u-u_l=-\int_{\rho_l}^{\rho} \sqrt{\frac{\epsilon p^{'}(\xi)}{\xi}} d\xi,\,\,\, \rho < \rho_l\big\}
\end{equation}
\textit{2- Rarefaction curve}: Second Rarefaction curve $R_2$ passing through $(u_l,\rho_l)$ is derived by solving;
\begin{equation}
\frac{du}{d\rho}=\sqrt{\frac{{\epsilon}p'(\rho)}{\rho}},\,\,\,\,\,\,\,\,\,\,\, u(\rho_l)=u_l.
\end{equation}
\begin{equation}
R_2=\big\{(u,\rho):u-u_l=\int_{\rho_l}^{\rho} \sqrt{\frac{\epsilon p^{'}(\xi)}{\xi}} d\xi,\,\,\, \rho > \rho_l\big\}
\end{equation}
To solve the equation \eqref{e1.8} with \eqref{e1.4}, three cases are required to be considered, that is (I)   $u_l>u_r$, (II) $u_l=u_r$ and   (III) $u_l<u_r$. For case (I) we have solution as a combination of two shock waves, for case (II) solutions  are given as the combination of 1-rarefaction and 2-shock curves or 1-shock and 2-rarefaction curves depending upon $\rho_l>\rho_r$ or $\rho_l<\rho_r$ respectively. And finally in case (III) solution consists of two rarefaction waves and vacuum state. In each case limit has been found and it is exactly equal to the vanishing viscosity limit found in \cite{j2} which satisfies our expectation.

\section {Formation of shock waves for $u_l >u_r$}
In this section the limiting behavior for the solution of \eqref{e1.8}-\eqref{e1.4} for $u_l> u_r$ as $\epsilon\rightarrow0$ has been studied. We first find solution for the system \eqref{e1.8} satisfying Lax- entropy condition for case $u_l >u_r$. $\rho_l$ and $\rho_r$ are taken positive through out this section.
The key result of this section is the following.

\begin{theorem}
If $u_l > u_r$, there exists a $\eta>0$ such that for any $\epsilon < \eta$, we have a unique intermediate state $(u^ *_{\epsilon}, \rho^ *_{\epsilon})$  which connects $( u_l, \rho_l)$  to  $(u^ *_{\epsilon}, \rho^ *_{\epsilon})$  by 1-shock and  $(u^ *_{\epsilon}, \rho^ *_{\epsilon})$  to $( u_r, \rho_r)$ by 2-shock and satisfies Lax-entropy condition.
\end{theorem}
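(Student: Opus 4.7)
The plan is to construct $(u^*_\epsilon,\rho^*_\epsilon)$ as the intersection of the Lax-admissible branch of the $1$-shock curve through $(u_l,\rho_l)$ with the Lax-admissible backward $2$-shock curve terminating at $(u_r,\rho_r)$, and then to verify the four Lax inequalities directly. From the formula for $s_1$ in Section~2, the requirement $u^*<u_l$ (needed so that the shock speed can lie strictly below $\lambda_1(u_l,\rho_l)<u_l$) selects the branch
\begin{equation*}
u = u_l - h_l(\rho),\qquad h_l(\rho):=\sqrt{2\epsilon\,\frac{(\rho-\rho_l)(p(\rho)-p(\rho_l))}{\rho+\rho_l}},\qquad \rho>\rho_l.
\end{equation*}
Treating $(u_r,\rho_r)$ as a point on the $s_2$ curve through $(u^*,\rho^*)$ forces $\rho^*>\rho_r$ and $u^*>u_r$, giving the backward $2$-shock branch
\begin{equation*}
u = u_r + h_r(\rho),\qquad h_r(\rho):=\sqrt{2\epsilon\,\frac{(\rho-\rho_r)(p(\rho)-p(\rho_r))}{\rho+\rho_r}},\qquad \rho>\rho_r.
\end{equation*}
Equating the two expressions reduces everything to finding $\rho^*>\max(\rho_l,\rho_r)$ solving $F_\epsilon(\rho^*):=h_l(\rho^*)+h_r(\rho^*)=u_l-u_r$.

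Next I would establish existence and uniqueness by monotonicity. With $p'(\xi)=\xi e^\xi$ and $p''(\xi)=(1+\xi)e^\xi>0$, $p$ is strictly increasing and strictly convex. A direct differentiation yields
\begin{equation*}
\frac{d}{d\rho}\bigl(h_l(\rho)^2\bigr)=\frac{2\epsilon\bigl[\,2\rho_l\bigl(p(\rho)-p(\rho_l)\bigr)+(\rho^2-\rho_l^2)\,p'(\rho)\,\bigr]}{(\rho+\rho_l)^2}>0,
\end{equation*}
so $h_l$ is continuous, vanishes at $\rho_l$, strictly increases on $[\rho_l,\infty)$, and grows to $+\infty$; the same holds for $h_r$. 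Hence $F_\epsilon$ is continuous, strictly increasing on $[\max(\rho_l,\rho_r),\infty)$, unbounded above, with $F_\epsilon(\max(\rho_l,\rho_r))=O(\sqrt{\epsilon})$ as $\epsilon\to 0$. Choosing $\eta>0$ so small that this boundary value is strictly less than the fixed positive number $u_l-u_r$ for every $\epsilon<\eta$, the intermediate value theorem delivers a unique $\rho^*_\epsilon$, and $u^*_\epsilon$ is then determined from either branch.

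Finally I would verify the four Lax inequalities at $(u^*_\epsilon,\rho^*_\epsilon)$. Using the Rankine-Hugoniot relations and $\rho^*>\max(\rho_l,\rho_r)>0$,
\begin{equation*}
s_1-u^*=\frac{\rho_l(u^*-u_l)}{\rho^*-\rho_l}<0,\qquad s_2-u^*=\frac{\rho_r(u^*-u_r)}{\rho^*-\rho_r}>0,
\end{equation*}
so $s_1<u^*<s_2$, which combined with strict hyperbolicity yields the outer crossings $s_1<\lambda_2(u^*,\rho^*)$ and $s_2>\lambda_1(u^*,\rho^*)$. The remaining conditions $\lambda_1(u^*,\rho^*)<s_1<\lambda_1(u_l,\rho_l)$ reduce, after squaring and clearing denominators, to
\begin{equation*}
\rho_l\,p'(\rho_l)<\frac{2\rho^{*2}\bigl(p(\rho^*)-p(\rho_l)\bigr)}{\rho^{*2}-\rho_l^{2}}\quad\text{and}\quad \frac{2\rho_l^{2}\bigl(p(\rho^*)-p(\rho_l)\bigr)}{\rho^{*2}-\rho_l^{2}}<\rho^*\,p'(\rho^*);
\end{equation*}
these follow by combining the strict convexity bounds $p'(\rho_l)<(p(\rho^*)-p(\rho_l))/(\rho^*-\rho_l)<p'(\rho^*)$ with the positivity of $(2\rho^*+\rho_l)(\rho^*-\rho_l)$ and $(\rho^*+2\rho_l)(\rho^*-\rho_l)$. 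A symmetric calculation disposes of the $2$-shock. The main technical task is the bookkeeping that collapses each of the four Lax inequalities to a strict-convexity statement about $p$; once this is in place, the Lax step itself is $\epsilon$-independent, and the smallness $\epsilon<\eta$ is used only to force the intersection into the admissible region $\rho^*>\max(\rho_l,\rho_r)$.
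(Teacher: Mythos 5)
Your proof is correct, and while it uses the same basic ingredients as the paper (monotonicity, the intermediate value theorem, and convexity of $p$ for the Lax check), it organizes them in a genuinely different and cleaner way. The paper parametrizes both admissible shock branches by $u$: implicit differentiation of \eqref{e2.10} and \eqref{e2.9} shows $\rho^*_1(u)$ is strictly decreasing and $\rho^*_2(u)$ strictly increasing, and the intersection comes from a sign change of $\rho^*_1-\rho^*_2$ on $[u_r,u_l]$, with the smallness of $\epsilon$ entering through the blow-up of $\rho^*_1(u_r)$ and $\rho^*_2(u_l)$ as $\epsilon\to 0$. You instead parametrize by $\rho$ and collapse the whole intersection problem to one monotone scalar equation $h_l(\rho)+h_r(\rho)=u_l-u_r$ on $[\max(\rho_l,\rho_r),\infty)$: monotonicity is immediate from your explicit derivative of $h_l^2$, existence and uniqueness follow from a single application of the intermediate value theorem, and $\epsilon<\eta$ is used only for the transparent endpoint estimate $F_\epsilon(\max(\rho_l,\rho_r))=O(\sqrt{\epsilon})<u_l-u_r$, giving an explicit choice of $\eta$ where the paper argues by blow-up. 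Your Lax verification is in fact sharper than the paper's: squaring correctly produces the asymmetric pair $\rho_l\,p'(\rho_l)<2\rho^{*2}\bigl(p(\rho^*)-p(\rho_l)\bigr)/(\rho^{*2}-\rho_l^{2})$ and $2\rho_l^{2}\bigl(p(\rho^*)-p(\rho_l)\bigr)/(\rho^{*2}-\rho_l^{2})<\rho^*\,p'(\rho^*)$, with $\rho^{*2}$ in one numerator and $\rho_l^{2}$ in the other, and your factorizations $(2\rho^*+\rho_l)(\rho^*-\rho_l)>0$ and $(\rho^*+2\rho_l)(\rho^*-\rho_l)>0$ together with $p'(\rho_l)<\bigl(p(\rho^*)-p(\rho_l)\bigr)/(\rho^*-\rho_l)<p'(\rho^*)$ make the convexity mechanism explicit; by contrast the paper's sufficient condition \eqref{e2.7} carries $\rho^2$ in both middle terms (stronger than needed on one side) and is justified only by the bare remark that $p$ and $p'$ are increasing. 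The one point you assert rather than prove is that Lax admissibility forces the branch selection $u^*\le u_l$, $\rho^*\ge\rho_l$ and $u^*\ge u_r$, $\rho^*\ge\rho_r$, which is what reduces uniqueness of the intermediate state to uniqueness of the root of $F_\epsilon$; since the paper makes the corresponding claim at exactly the same level of detail, this is not a gap relative to the published argument.
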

\begin{proof}
The admissible 1-shock curve passing through $(\bar{u}, \bar{\rho})$ satisfies the following:

\begin{equation}
\begin{aligned}
(u-\bar{u})s_1=&(\frac{u^2}{2}+\epsilon p(\rho))-(\frac{\bar{u}^2}{2}+\epsilon p(\bar{\rho}))\\
(\rho-\bar{\rho})s_1=&\rho u-\bar{\rho}\bar{u},
 \label{e2.1}
\end{aligned}
\end{equation}
and satisfies the inequality
\begin{equation}
 s_1 < \lambda_1 (\bar{u}, \bar{\rho}), \,\,   \lambda_1 (u, \rho) < s_1 < \lambda_2 (u, \rho).
\label{ee2.2}
\end{equation}

Eliminating $s_1$  from \eqref{e2.1} and simplifying yields
\begin{equation}
(u-\bar{u})^2=2\epsilon\frac{\rho-\rho_l}{\rho+\rho_l}(p(\rho)-p(\rho_l))
\label{e2.3}
\end{equation}
We show that for a given $u< \bar{u}$, there exists a unique $\rho > \bar{\rho}$ such that equation \eqref{e2.3} holds. For that let us define a function
\begin{equation}
F(\rho):= 2\epsilon\frac{\rho-\bar{\rho}}{\rho+\bar{\rho}}(p(\rho)-p(\bar{\rho}))
\label{e2.4}
\end{equation}
We see that $F(\bar{\rho})=0$ and $F(\rho)\rightarrow \infty$ as $\rho \rightarrow\infty$. So by intemediate value theorem we have $F([\bar{\rho},\infty))=[0,\infty)$.  Hence for a given $u$ there exist a $\rho>\bar{\rho}$ such that
\begin{equation*}
F(\rho)=(u-\bar{u})^2.
\end{equation*}
This proves existence. To prove the uniqueness, now differentiate the equation \eqref{e2.4} with respect to $\rho$ to get
\begin{equation*}
F^{\prime}(\rho)= 2\epsilon \frac{2\bar{\rho}}{(\rho+\bar{\rho})^2}(p(\rho)-p(\bar{\rho}))+ 2\epsilon\frac{\rho-\bar{\rho}}{\rho+\bar{\rho}}p^{'}(\rho)
\end{equation*}
As $\rho$$>$$\bar{\rho}$ and $p^{'}(\rho)$ $>$ $0$,  $F^{\prime}(\rho)$ is positive. So $(u-\bar{u})^2$ will be achieved only once in the interval $[\bar{\rho}, \infty)$, which proves the uniqueness. The condition \eqref{e2.1} and \eqref{ee2.2} holds iff $u \leq \bar{u}$ and $\rho \geq \bar{\rho}$. In fact, 

 $s_1$ satisfies \eqref{ee2.2} if
\begin{equation}
\begin{aligned}
&  \frac{\rho u - \bar{\rho}  \bar{u}}{\rho -  \bar{\rho}} <   \bar{u} -\sqrt{\epsilon p^{' }( \bar{\rho})} \bar{\rho}\\
&  u -\sqrt{\epsilon p^{' }(\rho)}\rho<\frac{\rho u - \bar{\rho}  \bar{u}}{\rho -  \bar{\rho}}< u+\sqrt{\epsilon p^{\prime}(\rho)\rho}\\
\label{e2.6}
\end{aligned}
\end{equation}
The inequality \eqref{e2.6} holds if
\begin{equation}
\begin{aligned}
 \epsilon p^{' }(\bar{\rho})\bar{\rho} < \frac{\rho^2 (u- \bar{u})^2}{(\rho-\bar{ \rho})^2} <\epsilon p^{' }(\rho)\rho.
\end{aligned}
\label{e2.7}
\end{equation}
 Since $(u,\rho)$ satisfies \eqref{e2.4}, \eqref{e2.7} holds if 
\begin{equation*}
 p^{' }(\bar{\rho})\bar{\rho} <  \frac{2 \rho^2 (p(\rho)-p(\bar{\rho}))}{(\rho -\bar{\rho})(\rho+\bar{\rho})}<  p^{' }(\rho)\rho.
\end{equation*}
The above is true since $p$ and $p^{\prime}$ is an increasing function.

Therefore  the branch of the curve satisfying \eqref{e2.1} and \eqref{ee2.2} can be parameterized by a $C^1$ function $\rho_1: (-\infty, \bar{u}] \rightarrow [\bar{\rho}, \infty)$
with parameter $u$.

 From the equation \eqref{e2.3}, $\rho_1(u)$ satisfies 

 \begin{equation}
\frac{(u-u_l)^2}{\epsilon} \frac{\rho_l +\rho_{1}(u)}{2(\rho_{1}(u)-\rho_l))}+p(\rho_l)= p(\rho_{1}(u)).
\label{e2.10}
\end{equation}

Differentiating the above equation with respect to $u$, we get
\begin{equation*}
\frac{(u-u_l)(\rho_l +\rho_1(u))}{\epsilon(\rho_1(u)-\rho_l)}+\frac{- \rho_l \rho_1^{{\prime}}(u)(u-u_l)^2}{4 \epsilon (\rho(u)-\rho_l)}= p^{\prime}(\rho_1(u)) \rho_1^{{\prime}}(u).
\end{equation*}
Since $\rho_1 (u)>\rho_l$ ,  $\rho_l +\rho_1(u)$ and $-\rho_l +\rho_1(u)$ are positive. This implies  $\rho_1^{{\prime}}(u)$ is negative, because $p^{\prime}$ is positive.

Similarly the branch of the curve satisfying 
\begin{equation*} 
s_1 >\lambda_2 ({u},  {\rho}), \,\,   \lambda_1 (\bar{u}, \bar{\rho}) < s_1 < \lambda_2 (\bar{u}, \bar{\rho}).
\end{equation*} 
is the admissible 2-shock curve which can be parameterized by a $C^1$ function $\rho_2: (-\infty, \bar{u}] \rightarrow (-\infty, \bar{\rho}]$
with parameter $u$. 

Also $\rho_2$ satisfies the following equation: 
\begin{equation}
\frac{(u-\bar{u})^2}{\epsilon} \frac{\bar{\rho}+\rho_{2}(u)}{2(\rho_{2}(u)-\bar{\rho}))}+p(\bar{\rho})= p(\rho_{2}(u)).
\label{e2.9}
\end{equation}

 Differentiating the above equation \eqref{e2.9} with respect to $u$, we get
\begin{equation*}
\frac{(u-\bar{u})(\bar{\rho}+\rho_2(u))}{\epsilon(\rho_2(u)-\bar{\rho})}+\frac{- \bar{\rho}\rho_2{{\prime}}(u)(u-\bar{u})^2}{4 \epsilon (\rho_2(u)-\bar{\rho})}= p^{\prime}(\rho_2(u)) \rho_2{{\prime}}(u).
\end{equation*}
Since $\rho_2(u)> \bar{\rho}$,  $\bar{\rho}+\rho_2(u)$ and $-\bar{\rho}+\rho_2(u)$ are positive. This implies  $\rho_2^{{\prime}}(u)$ is positive, because $p^{\prime}$ is positive.

Consider the branch of the curve passing through $(u_r, \rho_r)$ satisfying the condition $u>u_r,\,\,\, \rho > \rho_r$. In a similar way as above it can be parameterized by a $C^1$- curve
$\rho^{*}_2(u)$.  The part of the curve $\rho^{*}_2$ from $(w, z)$  to $(u_r, \rho_r)$ will be the admissible 2-shock curve connecting $(w, z)$ to $(u_r, \rho_r)$.  So it is clear that 
${\rho^*_2}^ {\prime}(u)$ is positive.

Let's denote admissible 1-shock curve passing through $(u_l, \rho_l)$ as $\rho^{*}_1 $.  As from the previous analysis this is parameterized by  $\rho^{*}_1 :  (- \infty, u_l] \rightarrow [\rho_l, \infty)$ and satisfies ${\rho^*_1}^ {\prime}(u)< 0$ . 

$\rho^{*}_1(u_r)$  satisfies \eqref{e2.9} with $\rho_1(u)$  and $\bar{u}$replaced by $\rho^{*}_1 (u_r)$ and $u_r$ respectiveily,  and $\rho^{*}_2 (u_l)$  satisfies  \eqref{e2.10} with 
$\rho_2(u)$ and $\bar{u}$ replaced by $\rho^{*}_2 (u_l)$ and $u_l$ respectiveily. Hence $\rho^{*}_1(u_r)$ and  $\rho^{*}_2 (u_l)$  goes to $\infty$ as $\epsilon$ tends to zero. Therefore there exists a $\eta >0$  such that $\epsilon< \eta, $  we have  $ \rho^{*}_2 (u_l)>\rho_l$ and  $ \rho^{*}_1 (u_r)>\rho_r$. Now consider the function $\rho^{*}_1- \rho^{*}_2$. Since $ \rho^{*}_1(u_l)-\rho^{*}_2(u_l)=\rho^{*}_1 -\rho^{*}_2 (u_l) <0$  and  $ \rho^{*}_1(u_r)-\rho^{*}_2 (u_r)= \rho^{*}_1(u_r)-\rho_r >0$.  Now by intermediate value theorem there exist a point $ u^{*}_{\epsilon}$ such that  $\rho^{*}_1( u^{*}_{\epsilon})=\rho^{*}_2 ( u^{*}_{\epsilon})=\rho^{*}_{\epsilon}$(say). 
 $\rho^{*}_{\epsilon}$ is unique because $\rho^{*}_1$  is stricty decreasing and $\rho^{*}_2$ is strictly increasing. Since we are cosidering only admissible curves  lax entropy condition 
holds. This completes the proof.\\
\end{proof}
 
Now we determine the limit of the problem \eqref{e1.8} for the shock case. For this  first we will define $\delta$-distribution followed by a simple technical lemma which will be useful later.

\textbf{Definition}: A weighted $\delta$-distribution "$ d(t)\delta_{x=c(t)}$" is concentrated on a smooth curve $x=c(t)$ can be defined by
\begin{equation*}
\langle\, d(t)\delta_{x=c(t)},\varphi(x,t)\rangle=\int_{0}^{\infty}d(t)\varphi(c(t),t)dt
\end{equation*}
for all $\varphi\in C^\infty_c(\mathbb{R}\times (0,\infty))$
\begin{lem}
Suppose $a_{\epsilon}(t)$ and $b_{\epsilon}(t)$ converges uniformly to $0$ on compact subsets of $(\mathbb{R}\times (0,\infty))$ as $\epsilon$ tends to zero.  Also assume that $d_{\epsilon}(t)$ conveges to $d(t)$  uniformly on compact subsets of $(\mathbb{R}\times (0,\infty))$ as $\epsilon$ tends to zero, then 
\begin{equation*}
\frac{1}{b_{\epsilon}(t)-a_{\epsilon}(t)} d_{\epsilon}(t)
\displaystyle{\chi_{(c(t)-a_{\epsilon}(t), c(t)+b_{\epsilon}(t))}}(x)
\end{equation*}
converges to $d(t)\delta_{x=c(t)}$ in the sense of distribution.
\end{lem}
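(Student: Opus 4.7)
The plan is a direct test-function computation: unpack the definition of convergence in $\mathcal{D}'(\mathbb{R}\times(0,\infty))$, reduce the spatial integral to an average over a shrinking interval, and apply dominated convergence in $t$.

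First I would fix $\varphi\in C_c^\infty(\mathbb{R}\times(0,\infty))$ and use Fubini to write
\begin{equation*}
\left\langle \frac{d_\epsilon(t)}{b_\epsilon(t)-a_\epsilon(t)}\chi_{I_\epsilon(t)}(x),\,\varphi(x,t)\right\rangle
=\int_0^\infty \frac{d_\epsilon(t)}{b_\epsilon(t)-a_\epsilon(t)}\!\!\int_{I_\epsilon(t)}\!\!\varphi(x,t)\,dx\,dt,
\end{equation*}
where $I_\epsilon(t)$ denotes the indicated interval of vanishing length around $c(t)$. Using the integral mean value theorem on the inner integral (valid since $\varphi(\cdot,t)$ is continuous), there exists $\xi_\epsilon(t)\in I_\epsilon(t)$ such that the inner integral equals (length of $I_\epsilon(t)$)$\,\cdot\,\varphi(\xi_\epsilon(t),t)$; after cancellation against the prefactor this leaves the integrand in $t$ as $d_\epsilon(t)\varphi(\xi_\epsilon(t),t)$ (up to the matching of width and denominator given by the hypotheses of the lemma).

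Next I would take the pointwise limit in $t$. Since $a_\epsilon(t),b_\epsilon(t)\to0$ uniformly on the compact $t$-projection $K$ of $\mathrm{supp}\,\varphi$, we have $\xi_\epsilon(t)\to c(t)$ uniformly on $K$, and by continuity of $\varphi$, $\varphi(\xi_\epsilon(t),t)\to\varphi(c(t),t)$ uniformly on $K$. Combined with the uniform convergence $d_\epsilon(t)\to d(t)$ on $K$, the integrand $d_\epsilon(t)\varphi(\xi_\epsilon(t),t)$ converges uniformly to $d(t)\varphi(c(t),t)$ on $K$, and it is supported in $K$ for all small $\epsilon$. Hence by dominated convergence (or simply uniform convergence on a set of finite measure),
\begin{equation*}
\int_0^\infty d_\epsilon(t)\varphi(\xi_\epsilon(t),t)\,dt\;\longrightarrow\;\int_0^\infty d(t)\varphi(c(t),t)\,dt
=\langle d(t)\delta_{x=c(t)},\varphi\rangle,
\end{equation*}
which is exactly the required distributional limit.

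The only non-routine point is the bookkeeping that relates the prefactor $(b_\epsilon-a_\epsilon)^{-1}$ to the length of the interval $I_\epsilon(t)$ so that the mean-value reduction produces precisely $d_\epsilon(t)\varphi(\xi_\epsilon,t)$; once this matching is in place, everything else follows from the uniform-on-compacts convergence of $a_\epsilon,b_\epsilon$ and $d_\epsilon$ together with the compact support of $\varphi$. No smoothness of $c(t)$ beyond continuity is needed in the argument, and no cancellation properties of $d_\epsilon$ are required because the convergence is uniform rather than merely weak.
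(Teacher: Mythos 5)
Your proof is correct and is essentially the paper's own argument: the paper likewise tests against a fixed $\varphi\in C_c^\infty$, reduces the pairing to
\begin{equation*}
\int_{0}^{\infty}\Big|\frac{1}{b_{\epsilon}(t)-a_{\epsilon}(t)}\int_{c(t)-a_{\epsilon}(t)}^{c(t)+b_{\epsilon}(t)}d_{\epsilon}(t)\varphi(x,t)\,dx-d(t)\varphi(c(t),t)\Big|\,dt,
\end{equation*}
and concludes from the uniform convergence of $d_\epsilon$ and the compact support of $\varphi$; your mean-value-theorem step merely makes explicit the averaging that the paper leaves implicit.

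One substantive remark on the point you flag as ``bookkeeping'': the matching of width and denominator is \emph{not} supplied by the hypotheses, because the interval $(c(t)-a_\epsilon(t),\,c(t)+b_\epsilon(t))$ has length $a_\epsilon(t)+b_\epsilon(t)$, not $b_\epsilon(t)-a_\epsilon(t)$. Taken literally the lemma is false: with $d_\epsilon\equiv 1$, $a_\epsilon=\epsilon$, $b_\epsilon=2\epsilon$, the expression converges to $3\,\delta_{x=c(t)}$ rather than $\delta_{x=c(t)}$. This is a sign slip in the paper, present in its own proof as well; the application in Theorem 3.3 confirms the intended reading, since there $a_\epsilon(t)=c(t)-s_1t$, $b_\epsilon(t)=s_2t-c(t)$, and one checks $d_\epsilon(t)=\rho^*_\epsilon\,(s_2-s_1)t=\rho^*_\epsilon\,\bigl(a_\epsilon(t)+b_\epsilon(t)\bigr)$, so the normalization consistent with the middle state $\rho^*_\epsilon$ is $\frac{1}{a_\epsilon(t)+b_\epsilon(t)}$. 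With the denominator read as the interval length, your cancellation is exact and the rest of your argument (uniform convergence $\xi_\epsilon(t)\to c(t)$ on the compact $t$-projection of $\mathrm{supp}\,\varphi$, uniform convergence $d_\epsilon\to d$, and dominated convergence) is complete; you are also right that only continuity of $c$ is actually used.
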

\begin{proof}
Denote 
\begin{equation*}
\Psi(x,t)=\frac{1}{b_{\epsilon}(t)-a_{\epsilon}(t)}d_{\epsilon}(t)
\chi_{(c-a_{\epsilon}(t), c+b_{\epsilon}(t))}(x).
\end{equation*}
Now consider the integral
\begin{equation*}
\begin{aligned}
&\Big|\int_{0}^{\infty}\int_{-\infty}^{\infty} \big(\Psi(x,t)\varphi(x,t) dx dt-\int_{0}^ {\infty} d(t)\varphi(c(t),t) dt \big)\Big|\\
&\leq\int_{0}^{\infty}\Big|\frac{1}{b_{\epsilon}(t)-a_{\epsilon}(t)}\int_{c(t)-a_{\epsilon}(t)}^{c(t)+b_{\epsilon}(t)}d_{\epsilon}(t)\varphi(x,t)-d(t)\varphi(c(t),t)\Big|\,\,dx\,dt
\end{aligned}
\label{e2.12}
\end{equation*}
Now since $\varphi(x,t)$ has compact support and $d_{\epsilon}(t)$ converges to $d(t)$  uniformly on compact sets as $\epsilon\rightarrow0$,  the last integral converges to $0$.
Since this is true for all test function $\varphi$, the proof of this lemma is completed .
\end{proof}

\begin{theorem}
The distribution limit $(u^{\epsilon}, \rho^{\epsilon})$ exists  as $\epsilon$ approaches zero and is given by $(u, \rho)$.
\begin{equation}
u(x,t)=\begin{cases}
u_l,\,\,\, \textnormal{if} x< \frac{u_l +u_r}{2}t\\
\frac{u_l+u_r}{2},\,\,\,  \textnormal {if}\,\,\,   x=\frac{u_l+u_r}{2}t\\
u_r,\,\,\, \textnormal{if} x> \frac{u_l +u_r}{2}t
\label{eq3.7}
\end{cases}
\end{equation}
and 
\begin{equation}
\rho(x,t)=\begin{cases}
\rho_l,\,\,\, \textnormal{if}\,\,\,\, x< \frac{u_l +u_r}{2}t\\
(u_l-u_r)\frac{\rho_l +\rho_r}{2} \delta_{x=\frac{u_l +u_r}{2}t},\,\,\, \textnormal{if}\,\,\,\, x= \frac{u_l +u_r}{2}t\\
\rho_r,\,\,\, \textnormal{if}\,\,\,\, x> \frac{u_l +u_r}{2}t
\end{cases}
\label{eq3.8}
\end{equation}
\end{theorem}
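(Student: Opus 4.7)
My plan is to leverage the explicit three-wave structure from Theorem 3.1. For sufficiently small $\epsilon$, the Riemann solution consists of $(u_l,\rho_l)$ on $x<s_1^\epsilon t$, the intermediate state $(u_\epsilon^*,\rho_\epsilon^*)$ on $s_1^\epsilon t<x<s_2^\epsilon t$, and $(u_r,\rho_r)$ on $x>s_2^\epsilon t$, where $s_1^\epsilon$ and $s_2^\epsilon$ are the two shock speeds. To obtain the stated limit I need three ingredients: (i) the strip width $(s_2^\epsilon-s_1^\epsilon)t$ tends to zero, (ii) $u_\epsilon^*$ and both $s_i^\epsilon$ tend to $\tfrac{u_l+u_r}{2}$, so that $u^\epsilon$ converges to the stated step function in $L^1_{\mathrm{loc}}$ (hence distributionally), and (iii) the total mass carried by the middle rectangle concentrates into a Dirac mass of the advertised weight along $x=\tfrac{u_l+u_r}{2}t$.

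The heart of the argument, and its main obstacle, is the asymptotic analysis of the intermediate state: one must control $\rho_\epsilon^*$ and $\epsilon p(\rho_\epsilon^*)$ simultaneously even though $p$ grows super-exponentially. The shock relation \eqref{e2.10} along the 1-shock through $(u_l,\rho_l)$ together with its analogue along the 2-shock through $(u_r,\rho_r)$ give, at $u=u_\epsilon^*$,
\begin{equation*}
\epsilon p(\rho_\epsilon^*)=(u_\epsilon^*-u_l)^2\frac{\rho_l+\rho_\epsilon^*}{2(\rho_\epsilon^*-\rho_l)}+\epsilon p(\rho_l)=(u_\epsilon^*-u_r)^2\frac{\rho_r+\rho_\epsilon^*}{2(\rho_\epsilon^*-\rho_r)}+\epsilon p(\rho_r).
\end{equation*}
I would argue by contradiction that $\rho_\epsilon^*$ cannot stay bounded along any subsequence: otherwise the left-hand sides would vanish while $u_\epsilon^*\in(u_r,u_l)$ keeps the right-hand sides bounded away from zero. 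Hence $\rho_\epsilon^*\to\infty$, which makes the fractions tend to $\tfrac12$; equating the two resulting limits shows that any subsequential limit $u_*\in[u_r,u_l]$ of $u_\epsilon^*$ satisfies $(u_*-u_l)^2=(u_*-u_r)^2$, forcing $u_\epsilon^*\to\tfrac{u_l+u_r}{2}$. The Rankine--Hugoniot relation for the density equation then reads $s_i^\epsilon=u_\epsilon^*+O(1/\rho_\epsilon^*)$, so both shock speeds converge to $\tfrac{u_l+u_r}{2}$.

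For the distributional limit of $\rho^\epsilon$, I would split it into the two outer constant pieces and the middle rectangle. The outer pieces converge in $L^1_{\mathrm{loc}}$ to $\rho_l\chi_{\{x<\frac{u_l+u_r}{2}t\}}+\rho_r\chi_{\{x>\frac{u_l+u_r}{2}t\}}$ because the interfaces $s_i^\epsilon t$ converge uniformly to $\tfrac{u_l+u_r}{2}t$ on compact time intervals. For the middle rectangle, adding the two Rankine--Hugoniot jumps for the density equation yields
\begin{equation*}
(s_2^\epsilon-s_1^\epsilon)\rho_\epsilon^*=\rho_l(u_l-s_1^\epsilon)+\rho_r(s_2^\epsilon-u_r)\longrightarrow(u_l-u_r)\frac{\rho_l+\rho_r}{2}.
\end{equation*}
Rewriting the middle rectangle as $[\rho_\epsilon^*(s_2^\epsilon-s_1^\epsilon)t]\cdot\tfrac{1}{(s_2^\epsilon-s_1^\epsilon)t}\chi_{(\frac{u_l+u_r}{2}t-a_\epsilon(t),\frac{u_l+u_r}{2}t+b_\epsilon(t))}$ with $a_\epsilon(t)=(\tfrac{u_l+u_r}{2}-s_1^\epsilon)t$ and $b_\epsilon(t)=(s_2^\epsilon-\tfrac{u_l+u_r}{2})t$ puts it exactly into the form handled by the preceding lemma, producing the weighted $\delta$-distribution supported on $x=\tfrac{u_l+u_r}{2}t$ as claimed.
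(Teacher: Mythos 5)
Your proposal is correct and is essentially the paper's own proof: the same contradiction argument (boundedness of $u^*_\epsilon\in(u_r,u_l)$ versus the shock relations forcing $u^*=u_l$ and $u^*=u_r$ simultaneously) showing $\rho^*_\epsilon\to\infty$ and $u^*_\epsilon\to\frac{u_l+u_r}{2}$, the same identity $(s^\epsilon_2-s^\epsilon_1)\rho^*_\epsilon=\rho_l(u_l-s^\epsilon_1)+\rho_r(s^\epsilon_2-u_r)\to\frac{1}{2}(u_l-u_r)(\rho_l+\rho_r)$ extracted from the density Rankine--Hugoniot conditions, and the same reduction of the middle rectangle to Lemma 3.2, with subsequence-independence giving full-sequence convergence. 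Your only departures are harmless streamlinings: you get $s^\epsilon_i\to\frac{u_l+u_r}{2}$ directly from $s^\epsilon_i=u^*_\epsilon+O(1/\rho^*_\epsilon)$ instead of the paper's route through $\lim_{\epsilon\to 0}\epsilon p(\rho^*_\epsilon)=\frac{(u_l-u_r)^2}{8}$, and your normalization $\frac{1}{(s^\epsilon_2-s^\epsilon_1)t}=\frac{1}{a_\epsilon(t)+b_\epsilon(t)}$ tacitly corrects the factor $\frac{1}{b_\epsilon(t)-a_\epsilon(t)}$ written in Lemma 3.2 and in \eqref{e3.17}, which should be the interval length $a_\epsilon(t)+b_\epsilon(t)$.
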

\begin{proof}
From the above theorem $(u^{*}_{\epsilon}, \rho^{*}_{\epsilon})$ satisfies the following conditions.
\begin{equation}
\begin{aligned}
(u^{*}_{\epsilon}-u_l)\frac{\rho^{*}_{\epsilon}  u^{*}_{\epsilon} -\rho_l u_l}{\rho^{*}_{\epsilon} -\rho_l}&=(\frac{{u^{*}_{\epsilon}}^2}{2}+\epsilon p(\rho^{*}_{\epsilon}))-(\frac{u_l^2}{2}+\epsilon p(\rho_l))\\
(u^{*}_{\epsilon}-u_r)\frac{\rho^{*} u^{*}_{\epsilon}-\rho_r u_r}{\rho^{*}_{\epsilon}-\rho_r}&=(\frac{{u^{*}_{\epsilon}}^2}{2}+\epsilon p(\rho^{*}_{\epsilon}))-(\frac{u_r^2}{2}+\epsilon p(\rho_r)).
\end{aligned}
\label{e2.11}
\end{equation}

We know $u^{*}_{\epsilon} \in (u_r, u_l)$.  So the sequence $u^{*}_{\epsilon}$ is bounded. Now our claim is that  $\rho^{*}_{\epsilon}$ is unbounded as $\epsilon$ tends to zero.\\
\textit{proof of the claim(2.1)}: Suppose $\rho^*_{\epsilon}$ is bounded.Then it has a convergent subsequence still denoted by $\rho^*_{\epsilon}$ and it converges to $\rho^*$ as $\epsilon \rightarrow 0$. Then from the equation (4.1) we get that $\rho^*_{\epsilon}$ satisfies:
\begin{equation}
(u^*_{\epsilon}-u_l)^2\frac{(\rho^*_{\epsilon}+\rho_l)}{2}=\epsilon(\rho^*_{\epsilon}-\rho_l)(p(\rho^*_{\epsilon})-p(\rho_l))
\end{equation}
Now as $\epsilon \rightarrow 0$,the above equation becomes
\begin{equation}
(u^*-u_l)^2\frac{(\rho^*+\rho_l)}{2}=0,
\end{equation}
as R.H.S of the equation is bounded. Now since $\rho^*+\rho_l >0$, we get $u^*= u_l$.
Again,since $\rho^*_{\epsilon}$ satisfies:
\begin{equation}
(u^*_{\epsilon}-u_r)^2\frac{(\rho^*_{\epsilon}+\rho_r)}{2}=\epsilon(\rho^*_{\epsilon}-\rho_r)(p(\rho^*_{\epsilon})-p(\rho_r))
\end{equation}
By similar argument we get, $u^*=u_r$. So $u_l=u_r$. This leads to a contradiction.\\\\
 So for a subsequence let  $u^{*}_{\epsilon}$ converges to $u^{*}$  and  $\rho^{*}_{\epsilon}$ tend to $+\infty$. Passing to the limit  for this subsequence in \eqref{e2.11}, we get
\begin{equation*}
\begin{aligned}
 u^{*}(u^{*}-u_l)&= \frac{{u^{*}}^2}{2}-\frac{{u_{l}}^2}{2}+l\\
 u^{*}(u^{*}-u_r)&= \frac{{u^{*}}^2}{2}-\frac{{u_{r}}^2}{2}+l,
\end{aligned}
\end{equation*}
where $\displaystyle {\lim_{\epsilon \rightarrow 0}} \epsilon p(\rho^{*}_{\epsilon})=l.$
Solving the above two equations we get
\begin{equation}
 u^{*}=\frac{u_l +u_r}{2}\,\, \textnormal{and}\,\, l=\frac{1}{8}(u_l -u_r)^2.
\label{e2.15}
\end{equation}

The solution for $(u^{\epsilon},\rho^{\epsilon })$ is given by
\begin{equation}
 \begin{aligned}
u^{\epsilon}(x,t)=\begin{cases} 
                            u_l  \,\,\,\,\textnormal{ if} \,\,\,\, x< ( \frac{{u^*}_{\epsilon} + u_l}{2} + \frac{\epsilon(p(\rho^{*}_{\epsilon})-p(\rho_l))}{u^{*}_{\epsilon}-u_l})t\\
                            u_{\epsilon}^* \,\,\,\, \textnormal{ if}\,\,\,\, ( \frac{{u^*}_{\epsilon} + u_l}{2} + \frac{\epsilon(p(\rho^{*}_{\epsilon})-p(\rho_l))}{u^{*}_{\epsilon}-u_l})t<x <
(\frac{{u^*}_{\epsilon} + u_r}{2} + \frac{\epsilon(p(\rho^{\star}_{\epsilon})-p(\rho_r))}{u^{*}_{\epsilon}-u_r})t\\
                            u_r \,\,\,\, \textnormal{ if} \,\,\,\, x> (\frac{{u^*}_{\epsilon} + u_r}{2} + \frac{\epsilon(p(\rho^{*}_{\epsilon})-p(\rho_r))}{u^{*}_{\epsilon}-u_r})t.
                           \end{cases}
 \end{aligned}
\label{e3.14}
\end{equation}
 \begin{equation}
 \begin{aligned}
\rho^{\epsilon}(x,t)=\begin{cases} 
                            \rho_l  \,\,\,\,\textnormal{ if} \,\,\,\, x< ( \frac{{u^*}_{\epsilon} + u_l}{2} + \frac{\epsilon(p(\rho^{*}_{\epsilon})-p(\rho_l))}{u^{*}_{\epsilon}-u_l})t\\
                            \rho_{\epsilon}^* \,\,\,\, \textnormal{ if}\,\,\,\, ( \frac{{u^*}_{\epsilon} + u_l}{2} + \frac{\epsilon(p(\rho^{*}_{\epsilon})-p(\rho_l))}{u^{*}_{\epsilon}-u_l})t<x <
(\frac{{u^*}_{\epsilon} + u_r}{2} + \frac{\epsilon(p(\rho^{\star}_{\epsilon})-p(\rho_r))}{u^{*}_{\epsilon}-u_r})t\\
                            \rho_r \,\,\,\, \textnormal{ if} \,\,\,\, x> (\frac{{u^*}_{\epsilon} + u_r}{2} + \frac{\epsilon(p(\rho^{*}_{\epsilon})-p(\rho_r))}{u^{*}_{\epsilon}-u_r})t.
                           \end{cases}
 \end{aligned}
\label{e3.15}
\end{equation}
As $u^*_{\epsilon}$ converges to $u^*=\frac{u_l+u_r}{2}$ as $\epsilon \rightarrow 0$, we have the limit for $u(x,t)$ as stated in the theorem.\\
From \eqref{e2.11} and \eqref{e2.15}, one can show that
 $$\displaystyle {\lim_{\epsilon \rightarrow 0}}[ \frac{{u^*}_{\epsilon} + u_l}{2} + \frac{\epsilon(p(\rho^{*}_{\epsilon})-p(\rho_l))}{u^{*}_{\epsilon}-u_l}]=\frac{u_l +u_r}{2},$$
and $$\displaystyle {\lim_{\epsilon \rightarrow 0}} \frac{{u^*}_{\epsilon} + u_r}{2} + \frac{\epsilon(p(\rho^{*}_{\epsilon})-p(\rho_r))}{u^{*}_{\epsilon}-u_r}
=\frac{u_l +u_r}{2}.$$

Let's denote,

\begin{equation}
\begin{aligned}
c(t)&= \frac{u_l + u_r}{2}t\\
a_{\epsilon}(t)&= c(t)-  ( \frac{{u^*}_{\epsilon} + u_l}{2} + \frac{\epsilon(p(\rho^{*}_{\epsilon})-p(\rho_l))}{u^{*}_{\epsilon}-u_l})t\\
b_{\epsilon}(t)&= (\frac{{u^*}_{\epsilon} + u_r}{2} + \frac{\epsilon(p(\rho^{*}_{\epsilon})-p(\rho_r))}{u^{*}_{\epsilon}-u_r})t- c(t)\\
d_{\epsilon}(t)&=\Big[\frac{u_r- u_l}{2}+ \frac{\epsilon(p(\rho^{*}_{\epsilon})-p(\rho_r))}{u^{*}_{\epsilon}-u_r}-
 \frac{\epsilon(p(\rho^{*}_{\epsilon})-p(\rho_l))}{u^{*}_{\epsilon}-u_l}\Big] \rho_{\epsilon}^*t.\\
\end{aligned}
\end{equation}

With the above notations the formula for $\rho^{\epsilon}$ in equation \eqref{e3.15} can be written in the following form as in the Lemma.
\begin{equation}
\begin{aligned}
\rho^{\epsilon}=& \rho_l \displaystyle{\chi_{(-\infty, c(t)- a_{\epsilon}(t))}}(x)+\frac{ d_{\epsilon}(t)}{b_{\epsilon}(t)- a_{\epsilon}(t)}\displaystyle{\chi_{(c(t)-a_{\epsilon}(t), c(t)+b_{\epsilon}(t))}}(x)\\
&+\rho_r\displaystyle{\chi_{(c(t)+b_{\epsilon}(t), \infty)}}(x).
\end{aligned}
\label{e3.17}
\end{equation}

 Now we will determine the limit of $d_{\epsilon}(t)$ as $\epsilon$ tends to zero.

The equation \eqref{e2.11} can also be written in the following  form.

 \begin{equation}
\begin{aligned}
&(\rho_{\epsilon}^{*}-\rho_l)\Big\{ \frac{u_{\epsilon}^{*}+u_l}{2}+ \frac{\epsilon(p(\rho_{\epsilon}^{*})-(\rho_l))}{u_{\epsilon}^{*}-u_l}\Big\}
=\rho_{\epsilon}^{*}u_{\epsilon}^{*}- \rho_l u_l\\
&(\rho_{\epsilon}^{*}-\rho_r)\Big\{ \frac{u_{\epsilon}^{*}+u_r}{2}+ \frac{\epsilon(p(\rho_{\epsilon}^{*})-(\rho_r))}{u_{\epsilon}^{*}-u_r}\Big\}
=\rho_{\epsilon}^{*}u_{\epsilon}^{*}- \rho_r u_r.
\label{e2.19}
\end{aligned}
\end{equation}
Subtracting second equation from the first in \eqref{e2.19}, we get
\begin{equation}
\begin{aligned}
&\Big[\frac{u_r- u_l}{2}+ \frac{\epsilon(p(\rho^{*}_{\epsilon})-p(\rho_r))}{u^{*}_{\epsilon}-u_r}-
 \frac{\epsilon(p(\rho^{*}_{\epsilon})-p(\rho_l))}{u^{*}_{\epsilon}-u_l}\Big] \rho_{\epsilon}^*\\
&= \rho_l u_l -  \rho_r u_r +\rho_r ( \frac{u_{\epsilon}^{*}+u_r}{2})-\rho_l( \frac{u_{\epsilon}^{*}+u_l}{2})\\
&+\rho_r \frac{\epsilon(p(\rho_{\epsilon}^{*})-p(\rho_r))}{u_{\epsilon}^{*}-u_r}-\rho_l \frac{\epsilon(p(\rho_{\epsilon}^{*})-p(\rho_l))}{u_{\epsilon}^{*}-u_l}.
\end{aligned}
\end{equation}
Passing to the limit as $\epsilon \rightarrow  0$,  we get
\begin{equation}
 \displaystyle {\lim_{\epsilon \rightarrow 0}} \Big[\frac{u_r- u_l}{2}+ \frac{\epsilon(p(\rho^{*}_{\epsilon})-p(\rho_r))}{u^{*}_{\epsilon}-u_r}-
 \frac{\epsilon(p(\rho^{*}_{\epsilon})-p(\rho_l))}{u^{*}_{\epsilon}-u_l}\Big] \rho_{\epsilon}^*=\frac{1}{2}(u_l -u_r)(\rho_l + \rho_r)
\label{e2.21}
\end{equation}
This implies 
\begin{equation}
\displaystyle {\lim_{\epsilon \rightarrow 0}} b_{\epsilon}(t)= \frac{1}{2}(u_l -u_r)(\rho_l + \rho_r)t.
\label{e2.22}
\end{equation}
Here in the calculation \eqref{e2.22}, we have used the fact that  $\displaystyle {\lim_{\epsilon \rightarrow 0}}  \epsilon p(\rho^{*}_{\epsilon})=\frac{1}{8}(u_l-u_r)^2$ and $ \displaystyle {\lim_{\epsilon \rightarrow 0}} u^{*}_{\epsilon} =\frac{u_l +u_r}{2}$ from the equation \eqref{e2.15}.

The first and third term of \eqref{e3.17} converges to  $\rho_l \displaystyle{\chi_{(-\infty, \frac{u_l + u_r}{2}t)}}(x)$  and \\
 $ \rho_r \displaystyle{\chi_{( \frac{u_l + u_r}{2}t, \infty)}}(x)$ respectively.
Hence employing the above lemma to the middle term of \eqref{e3.17}, we get the distribution limit $\rho(x,t)$ as given in the theorem. Note that all the analysis  has been done for a subsequence, but since limit is same for any subsequence, this implies full sequence converges. The proof of theorem 3.3 is completed.

\end{proof}

\section{entropy and entropy flux pairs}
In this section we explicitly find the entropy and entropy flux pairs for the perturbed system \eqref{e1.8}. Let us start with a definition of entropy-entropy flux pairs.\\
\textbf{Definition}: A continuously differentiable function $\eta:\mathbb{R}^2\mapsto \mathbb{R}$ is called an \textit{entropy} for the system with \textit{entropy flux} $q:\mathbb{R}^2\mapsto \mathbb{R}$,if 
\begin{equation*}
D\eta(u) . Df(u)= Dq(u),\,\,\,\, u\in\mathbb{R}^n
\end{equation*}
where $f$ is the flux function for the system.\\
\textbf{Entropy inequality}: A weak solution $u$ of a system is called \textit{entropy admissible} if 
\begin{equation*}
\iint_{\Omega} {\eta(u)\varphi_x + q(u)\varphi_x} \,dx\,dt \geq 0
\end{equation*}
for every \textit{positive, $C^\infty$-functions} $\varphi: \Omega\rightarrow \mathbb{R}^2$ with compact support.
The above  can be restated in the following way:
\begin{equation}
\eta(u)_t+q(u)_x \leq 0
\label{4.1}
\end{equation} 
in the distributional sense for every pair $(\eta ,q)$ defined above.\\\\
 A pair $(\eta,q)$ of real valued maps is an entropy-entropy flux pair of \eqref{e1.3} if 
\begin{equation}
\begin{aligned}
\bigg(\frac{\partial \eta}{\partial u}u+\frac{\partial \eta}{\partial \rho}\rho\, ,           \,\, \epsilon\rho e^\rho \frac{\partial \eta}{\partial u}+u\frac{\partial \eta}{\partial \rho}\bigg)
=\bigg(\frac{\partial q}{\partial u},\frac{\partial q}{\partial \rho}\bigg)
\end{aligned}
\end{equation}
That is;
\begin{equation}
\begin{aligned}
\frac{\partial q}{\partial u}=\frac{\partial \eta}{\partial u}u+\frac{\partial \eta}{\partial \rho}\rho\\
\frac{\partial q}{\partial \rho}=\epsilon\rho e^\rho\frac{\partial \eta}{\partial u}+u\frac{\partial \eta}{\partial \rho}
\label{e3.2}
\end{aligned}
\end{equation}
Eliminating $q$ from \eqref{e3.2}, we have
\begin{equation}
\frac{\partial^2 \eta}{\partial \rho^2}=\epsilon e^ \rho\frac{\partial^2 \eta}{\partial u^2}.
\end{equation}

One can see that
\begin{equation}
\eta(u,\rho)=\frac{1}{2}u^2 + \epsilon e^\rho
\end{equation}
 is a solution of above equation which is a strictly convex entropy (since $D^2\eta > 0$) of the system \eqref{e1.8} and the corresponding entropy flux is 
 \begin{equation}
 q(u,\rho)=\frac{1}{3}u^3 + \epsilon \rho u e^ \rho.
 \end{equation}
 So, we need to calculate the following.
 \begin{equation}
 \begin{aligned}
 \eta_t+ q_x= -s_{1}\bigg(\frac{1}{2}u^{*2}_{\epsilon}+\epsilon e^{\rho^*_{\epsilon}}-\frac{1}{2}u^{2}_{l}-\epsilon e^{\rho_l}\bigg)\delta_{x=s_{1}t}\\
 -s_{2}\bigg(\frac{1}{2}u^{2}_{r}+\epsilon e^{\rho_r}-\frac{1}{2}u^{*2}_{\epsilon}-\epsilon e^{\rho^*_{\epsilon}}\bigg)\delta_{x=s_{2}t} \\ 
 +\bigg(\frac{1}{3}u^{*3}_{\epsilon}+\epsilon\rho^*_{\epsilon}u^*_{\epsilon}e^{\rho^*_{\epsilon}}-\frac{1}{3}u^3_l-\epsilon\rho_lu_le^{\rho_l}\bigg)\delta_{x=s_{1}t}\\
 +\bigg(\frac{1}{3}u^3_r-\epsilon\rho_ru_re^{\rho_r}-\frac{1}{3}u^{*3}_{\epsilon}-\epsilon\rho^*_{\epsilon}u^*_{\epsilon}e^{\rho^*_{\epsilon}}\bigg)\delta_{x=s_{2}t},
\label{e3.6}
 \end{aligned}
 \end{equation}
 where $s_{1}t$ and $s_{2}t$ are defined as,
 \begin{equation}
 \begin{aligned}
 s_{1}t=( \frac{{u^*}_{\epsilon} + u_l}{2} + \frac{\epsilon(p(\rho^{*}_{\epsilon})-p(\rho_l))}{u^{*}_{\epsilon}-u_l})t,\\
 s_{2}t=(\frac{{u^*}_{\epsilon} + u_r}{2} + \frac{\epsilon(p(\rho^{*}_{\epsilon})-p(\rho_r))}{u^{*}_{\epsilon}-u_r})t. 
 \end{aligned}
 \end{equation}
 Let us consider,
 \begin{equation}
 \begin{aligned}
 -s_{1}\bigg(\frac{1}{2}u^{*2}_{\epsilon}+\epsilon e^{\rho^*_{\epsilon}}\bigg)+\bigg(\frac{1}{3}u^{*3}_{\epsilon}+\epsilon\rho^*_{\epsilon}u^*_{\epsilon}e^{\rho^*_{\epsilon}}\bigg)\\
 =u^{*2}_{\epsilon}\bigg(\frac{1}{3}u^*_{\epsilon}-\frac{1}{2}s_{1}\bigg)+ \epsilon e^{\rho^*_{\epsilon}}\bigg(\rho^*_{\epsilon}u^*_{\epsilon}-s_{1}\bigg)
\label{e3.7}
 \end{aligned}
 \end{equation}
 and
 \begin{equation}
 \begin{aligned}
 s_{2}\bigg(\frac{1}{2}u^{*2}_{\epsilon}+\epsilon e^{\rho^*_{\epsilon}}\bigg)-\bigg(\frac{1}{3}u^{*3}_{\epsilon}+\epsilon\rho^*_{\epsilon}u^*_{\epsilon}e^{\rho^*_{\epsilon}}\bigg)\\
 =u^{*2}_{\epsilon}\bigg(-\frac{1}{3}u^*_{\epsilon}+\frac{1}{2}s_{2}\bigg)+\epsilon e^{\rho^*_{\epsilon}}\bigg(-\rho^*_{\epsilon}u^*_{\epsilon}+s_{2}\bigg).
\label{e3.8}
 \end{aligned}
\end{equation}
Observe that as $\epsilon\rightarrow0$, the first term of the both equation \eqref{e3.7} and \eqref{e3.8} are going to the same quantity ( say $\frac{(u_l+u_r)^3}{48}$ ) with a negative sign. So it cancels each other after summing up.\\
Now the crucial part is to handle the second term of both equations. After adding   we have
\begin{equation}
\epsilon e^{\rho^*_{\epsilon}}\bigg(s_{2}-s_{1} \bigg).
\label{e3.9}
\end{equation}
We claim that as $\epsilon\rightarrow 0$, \eqref{e3.9} goes to $0$.\\
\textit{Proof of the claim}: \eqref{e3.9} can be written as 
\begin{equation}
\begin{aligned}
\epsilon e^{\rho^*_{\epsilon}}\bigg(s_{2}-s_{1} \bigg)
={\epsilon} \frac{p^{'}(\rho^*_{\epsilon})}{p(\rho^*_{\epsilon})}\frac{p(\rho^*_{\epsilon})}{\rho^*_{\epsilon}}\bigg(s_{2}-s_{1} \bigg).
\end{aligned}
\end{equation}
Now since 
\begin{equation}
 \lim_{\epsilon\rightarrow0}\frac{p^{'}(\rho^*_{\epsilon})}{p(\rho^*_{\epsilon})}=1\,\,\, \textnormal{and}\,\,\,  \lim_{\epsilon\rightarrow0}\epsilon p(\rho^*_{\epsilon})=l\,\,\,\textnormal{and}\,\,\, \rho^*_{\epsilon}\rightarrow \infty\,\,\, \textnormal{as}\,\,\, \epsilon\rightarrow0.
 \end{equation}
Moreover,
\begin{equation}
\lim_{\epsilon\rightarrow0}s_{1}=\lim_{\epsilon\rightarrow0}s_{2}.
\end{equation}
 This proves the claim.\\\\
Next consider the remaining terms of \eqref{e3.6}.
\begin{equation}
s_{1}\bigg(\frac{1}{2}u^2_l+\epsilon e^{\rho_l}\bigg)-\frac{1}{3}u^3_l-\epsilon\rho_lu_le^{\rho_l} \rightarrow \,\,\ \frac{u_l+u_r}{4}u^2_l-\frac{1}{3}u^3_l
\label{e3.11}
\end{equation}
and 
\begin{equation}
-s_{2}\bigg(\frac{1}{2}u^2_r+\epsilon e^{\rho_r}\bigg)+\frac{1}{3}u^3_l+\epsilon\rho_lu_le^{\rho_l} \rightarrow \,\,\ -\frac{u_l+u_r}{4}u^2_r-\frac{1}{3}u^3_r.
\label{e3.12}
\end{equation}
as $\epsilon\rightarrow0$.
So finally from \eqref{e3.6}, \eqref{e3.11}and \eqref{e3.12} we get,
\begin{equation*}
 \eta_t+ q_x \rightarrow \bigg(\frac{u_l+u_r}{4}(u^2_l-u^2_r)+\frac{1}{3}(u^3_r-u^3_l)\bigg)\delta _{x=\frac{u_l+u_r}{2}}
\end{equation*}
and 
\begin{equation*}
(\frac{u_l+u_r}{4}(u^2_l-u^2_r)+\frac{1}{3}(u^3_r-u^3_l)= -\frac{(u_l-u_r)^3}{12} < 0,
\end{equation*}
since $u_l>u_r$.  So for $\epsilon$ small \eqref{e3.6} satisfies \eqref{4.1}. This completes the proof.

\section{Solution for the case  $u_l \leq u_r$}

This section is devoted to discuss other two cases, i.e, $u_l = u_r$ and  $u_l < u_r$.  In this section our proof goes in the spirit of \cite{n1}.\\
\textit{CaseI $(u_l=u_r)$}:
For $u_l = u_r$, initial data is
\begin{equation}
\begin{pmatrix}
         u_0 (x)   \\
            \rho_0 (x) \\
         \end{pmatrix}
   =
\begin{cases}
\begin{pmatrix}
         u_l  \\
            \rho_l \\
         \end{pmatrix},\,\,\,\,\,\,\, \textnormal{if}\,\,\,\,\,\,\  x<0\\
 \begin{pmatrix}
         u_l  \\
            \rho_r \\
         \end{pmatrix},\,\,\,\,\,\,\, \textnormal{if}\,\,\,\,\,\,\  x>0.
\end{cases}
\end{equation}
Now if $\rho_l=\rho_r,$  we have the trivial solution $u(x,t)=u_l$ and $\rho(x,t) = \rho_l$.  Another two possibilities are  $\rho_r < \rho_l$ or  $\rho_r > \rho_l$.\\
\textit{Subcase I($\rho_r < \rho_l$)}: For this case we start traveling from the state $(u_l,\rho_l)$ and by $R_1$ we reach at $( u^*_{\epsilon}, \rho^*_{\epsilon})$, then from $( u^*_{\epsilon}, \rho^*_{\epsilon})$ we travel by $S_2$ and reach at $(u_l,\rho_r)$.
 1-rarefaction curve through $(u_l,\rho_l)$ is obtained solving the differential equation
\begin{equation}
\frac{du}{d\rho}=-\sqrt{\frac{\epsilon p^{'}(\rho)}{\rho}}\,\,\,\,\,\,\,\,\,\,\,\, u(\rho_l)=u_l
\label{5.6}
\end{equation}
So the branch of the curve satisfying \eqref{5.6} can be parameterized by a $C^{1}$ function $u_1:[\rho_r,\rho_l]\rightarrow[u_l,\infty)$ with parameter $\rho$.
Since $p^{'}(\rho) > 0$,  we see that $u_1$ is decreasing. Therefore, $u_1(\rho_r) > u_l$.\\ 
Any state $(u,\rho)$ connected to the end state  $(u_l,\rho_r)$ by admissible 2-shock curve $S_2$ satisfies the following equations;
\begin{equation}
(u-u_l)\frac{\rho u-\rho_r u_l}{\rho-\rho_r}=(\frac{u^2}{2}+\epsilon p(\rho))-(\frac{u_l^2}{2}+\epsilon p(\rho_r))
\label{5.2},
\end{equation}
\begin{equation}
s >\lambda_2 ({u},  {\rho}), \,\,   \lambda_1 (u_l,\rho_r) < s < \lambda_2 (u_l, \rho_r)
\label{5.3}
\end{equation}
\eqref{5.2} -\eqref{5.3} valid iff $u > u_l$ and $\rho > \rho_r$. Again \eqref{5.2} implies
\begin{equation}
(u-u_l)^2=2\epsilon\frac{(\rho-\rho_r)}{(\rho+\rho_r)}(p(\rho)-p(\rho_r)).
\end{equation}
Our claim is that for every fixed $\rho >\rho_r$ there exists a unique $u>u_l$ such that the equation (4.2) holds.
Let us define
\begin{equation*}
F(u):= (u-u_l)^2.
\end{equation*}
Since $F(u_l)=0$, we have $F(u) \rightarrow \infty$ as $u \rightarrow \infty$ and $F([u_l, \infty))= [0, \infty)$. Since $p$ is increasing and $\rho >\rho_r$, right hand side of (4.3) is positive. So there exists a $u>u_l$ such that 
\begin{equation*}
F(u)=2\epsilon\frac{(\rho-\rho_r)}{(\rho+\rho_r)}(p(\rho)-p(\rho_r))
\end{equation*}
Also,
\begin{equation*}
F^{'}(u)=2(u-u_l)>0,
\end{equation*}
since $u>u_l$. Therefore $u$ is unique.\\
Similarly in Theorem 3.1, the branch of the curve satisfying \eqref{5.2} and \eqref{5.3} can be parameterized by a $C^1$-function $u_2(\rho)=u_2 :[\rho_r,\rho_l]\rightarrow[u_l,\infty)$ satisfying
\begin{equation}
F(u_2(\rho))=(u_2(\rho)-u_l)^2=2\epsilon\frac{(\rho-\rho_r)}{(\rho+\rho_r)}(p(\rho)-p(\rho_r))
\label{e5.4}
\end{equation}
Note that $u_2(\rho_r)=u_l$ 
and from our argument it is clear that the function $u_2$ is well defined and our claim is that the function $u_2$ is increasing in the interval $(\rho_r,\rho_l)$
Now differentiating the above equation \eqref{e5.4} we get,
\begin{equation*}
(u_2(\rho)-u_l){u_2}^{\prime}(\rho)=\epsilon\big[p^{\prime}(\rho)\frac{\rho-\rho_r}{\rho+\rho_r}+(p(\rho)-p(\rho_r))\frac{2\rho_r}{(\rho+\rho_r)^2}\big]
\end{equation*}
Since $\rho>\rho_r$ and $p(\rho)$ is an increasing function, i.e, $p^{\prime}(\rho)>0$, RHS of above equation is $>0$ for small  $\epsilon>0$. That is,
$(u_2(\rho)-u_l){u_2}^{\prime}(\rho)>0$.
Previously we proved that for $\rho>\rho_r$ there exits a unique $u>u_l$ satisfying \eqref{e5.4}.  This implies  ${u_2}^{\prime}(\rho)>0$. This proves our claim.

From the above analysis,  there exists an intermediate state $\rho^*_{\epsilon} \in (\rho_r, \rho_l)$ such that $u_1(\rho^*_{\epsilon})=u_2(\rho^*_{\epsilon})=u^*_{\epsilon}.$
Hence the solution for \eqref{e1.8} is given by:
\begin{equation}
u^{\epsilon}=\left\{
	\begin{array}{llll}
		u_l  & \mbox{if } x < \lambda_1(u_l,\rho_l)t \\
		R^{u}_{1}(x/t)(u_l,\rho_l) & \mbox{if } \lambda_1(u_l,\rho_l)t< x < \lambda_1(u^*_{\epsilon},\rho^*_{\epsilon})t \\
                    u^*_{\epsilon} &\mbox{if } \lambda_1(u^*_{\epsilon},\rho^*_{\epsilon})t < x < \frac{\rho_r u_l-\rho^*_{\epsilon}u^*_{\epsilon}}{\rho_r-\rho^*_{\epsilon}}t \\
                     u_r                   &\mbox{if }  x>\frac{\rho_r u_l-\rho^*_{\epsilon}u^*_{\epsilon}}{\rho_r-\rho^*_{\epsilon}}t
	\end{array}
\right.
\end{equation}
and 
\begin{equation}
\rho^{\epsilon}=\left\{
	\begin{array}{llll}
		\rho_l  & \mbox{if } x < \lambda_1(u_l,\rho_l)t \\
		R^{\rho}_{1}(x/t)(u_l,\rho_l) & \mbox{if } \lambda_1(u_l,\rho_l)t< x < \lambda_1(u^*_{\epsilon},\rho^*_{\epsilon})t \\
                    \rho^*_{\epsilon} &\mbox{if } \lambda_1(u^*_{\epsilon},\rho^*_{\epsilon})t < x < \frac{\rho_r u_l-\rho^*_{\epsilon}u^*_{\epsilon}}{\rho_r-\rho^*_{\epsilon}}t \\
                     \rho_r                   &\mbox{if }  x>\frac{\rho_r u_l-\rho^*_{\epsilon}u^*_{\epsilon}}{\rho_r-\rho^*_{\epsilon}}t
	\end{array}
\right.
\end{equation}
Where $R_1(\xi)(\bar{u},\bar{\rho})=(R^{u}_{1}(\xi)(\bar{u},\bar{\rho}),R^{\rho}_{1}(\xi)(\bar{u},\bar{\rho}))$ and $R^{u}_{1}(\xi)(\bar{u},\bar{\rho})$ is obtained by solving 
\begin{equation}
\frac{du}{d\xi}=-\sqrt{\frac{{\epsilon}p'(\rho(\xi))}{\rho(\xi)}},\,\,\,\,\, u(0)=\bar{u}
\end{equation}
and
$R^{\rho}_{1}(\xi)(\bar{u},\bar{\rho})$ is obtained by solving 
\begin{equation}
\frac{d\rho}{d\xi}=1, \,\,\,\,\,\,\, \rho(0)=\bar{\rho}.
\end{equation}
\textit{Subcase II ($\rho_l < \rho_r$)}: This can be handled in a similar way. In fact, here we start from $(u_l,\rho_l)$ and reach at $(u^*_{\epsilon},\rho^*_{\epsilon})$ by $S_1$ and from $(u^*_{\epsilon},\rho^*_{\epsilon})$ to $ (u_l,\rho_r)$ by $R_2$. So, the solution is given by :
\begin{equation}
u^{\epsilon}=\left\{
	\begin{array}{llll}
		u_l  & \mbox{if } x <\frac{\rho^*_{\epsilon}u^*_{\epsilon}- \rho_l u_l}{\rho^*_{\epsilon}-\rho_l} t \\
		u^*_{\epsilon} & \mbox{if } \frac{\rho^*_{\epsilon}u^*_{\epsilon}- \rho_l u_l}{\rho^*_{\epsilon}-\rho_l}t< x < \lambda_2(u^*_{\epsilon},\rho^*_{\epsilon})t \\
                    R^{u}_{2}(x/t)(u^*_{\epsilon},\rho^*_{\epsilon}) &\mbox{if } \lambda_2(u^*_{\epsilon},\rho^*_{\epsilon})t < x <\lambda_2(u_l,\rho_r)t   \\
                     u_r                   &\mbox{if }  x>\lambda_2(u_l,\rho_r)t
	\end{array}
\right.
\end{equation}
and

\begin{equation}
\rho^{\epsilon}=\left\{
	\begin{array}{llll}
		\rho_l  & \mbox{if } x <\frac{\rho^*_{\epsilon}u^*_{\epsilon}- \rho_l u_l}{\rho^*_{\epsilon}-\rho_l} t \\
		\rho^*_{\epsilon} & \mbox{if } \frac{\rho^*_{\epsilon}u^*_{\epsilon}- \rho_l u_l}{\rho^*_{\epsilon}-\rho_l}t< x < \lambda_2(u^*_{\epsilon},\rho^*_{\epsilon})t \\
                    R^{\rho}_{2}(x/t)(u^*_{\epsilon},\rho^*_{\epsilon})
                     &\mbox{if } \lambda_2(u^*_{\epsilon},\rho^*_{\epsilon})t < x <\lambda_2(u_l,\rho_r)t   \\
                     \rho_r                   &\mbox{if }  x>\lambda_2(u_l,\rho_r)t
	\end{array}
\right.
\end{equation}
where $R_2(\xi)(\bar{u},\bar{\rho})=(R^{u}_{2}(\xi)(\bar{u},\bar{\rho}),R^{\rho}_{2}(\xi)(\bar{u},\bar{\rho}))$ and $R^{u}_{2}(\xi)(\bar{u},\bar{\rho})$ is obtained by solving 
\begin{equation}
\frac{du}{d\xi}=\sqrt{\frac{{\epsilon}p'(\rho(\xi))}{\rho(\xi)}},\,\,\,\,\, u(0)=\bar{u}
\end{equation}
and
$R^{\rho}_{2}(\xi)(\bar{u},\bar{\rho})$ is obtained by solving 
\begin{equation}
\frac{d\rho}{d\xi}=1, \,\,\,\,\,\,\, \rho(0)=\bar{\rho}.
\end{equation}
Now our aim is to find the limit of $(u^{\epsilon},\rho^{\epsilon})$ as $\epsilon \rightarrow  0$ in both of the above cases.
Since $\rho^*_{\epsilon} \in (\rho_l,\rho_r)$ or $\rho^*_{\epsilon} \in (\rho_r, \rho_l)$ this implies $\rho^*_{\epsilon}$ is bounded. Also $\rho^*_{\epsilon}$ and $u^*_{\epsilon}$ satisfies 
\begin{equation}
(u^*_{\epsilon}-u_l)^2\frac{(\rho^*_{\epsilon}+\rho_r)}{2}=\epsilon(\rho^*_{\epsilon}-\rho_r)(p(\rho^*_{\epsilon})-p(\rho_r))
\end{equation}
Since R.H.S is bounded, as $\epsilon \rightarrow 0$ we get,
\begin{equation}
\lim_{\epsilon\to 0} (u^*_{\epsilon}-u_l)^2\frac{(\rho^*_{\epsilon}+\rho_r)}{2}=0
\end{equation}
that is, $\lim_{\epsilon\to 0}u^*_{\epsilon}=u_l$.
Therefore the solution $(u^{\epsilon},\rho^{\epsilon}) \rightarrow (u,\rho)$ as $\epsilon \rightarrow 0$ where $(u,\rho)$ is given by:
\begin{equation}
\rho=\left\{
	\begin{array}{ll}
		\rho_l  & \mbox{if } x < u_l t \\
		 \rho_r  &\mbox{if }  x>u_l t
	\end{array}
\right.
\end{equation}
and
\begin{equation}
u=\left\{
	\begin{array}{ll}
		u_l  & \mbox{if } x < u_l t \\
		 u_r  &\mbox{if }  x>u_l t
	\end{array}
\right.
\end{equation}
Since here $u_l=u_r$ we have $u \equiv u_l$.\\\\
\textit{Case II $(u_l<u_r)$} : 
The 1st-rarefaction curve passing through $(u_l,\rho_l)$ is given by the solution of the following cauchy problem:
\begin{equation*}
\frac{du}{d\rho}=-\sqrt{\frac{\epsilon p^{'}(\rho)}{\rho}},\;\;\;\;\;\; \rho<\rho_l,\,\,\,\,\,\,\, \,\,\,\,u(\rho_l)=u_l.
\end{equation*}
Note that for this case it does not matter whether $\rho_l<\rho_r$ or $\rho_l>\rho_r$. So W.L.O.G we can take $\rho_l>\rho_r>0$. Now a branch of $R_1$ can be parameterized by a $C^{1}$ function $u_1:[0, \rho_l]\rightarrow [u_l,\infty)$ with a parameter $\rho$. Explicitly $u_1$ can be written as
\begin{equation}
u_1(\rho)-u_l=-\int_{\rho_l}^{\rho} \sqrt{\frac{\epsilon p^{'}(\xi)}{\xi}} d\xi.
\end{equation}
Since $\rho \in [0,\rho_l]$ is bounded and $p$ is increasing, we have $u_{1}(\rho) \rightarrow u_l$ as $\epsilon \rightarrow 0$ decreasingly. Similarly, the 2nd-rarefaction curve is given by the solution of then cauchy problem :
\begin{equation}
\frac{du}{d\rho}=\sqrt{\frac{\epsilon p^{'}(\rho)}{\rho}},\;\;\;\;\;\; \rho<\rho_r,\,\,\,\,\,\,\,\,\,\, u(\rho_r)=u_r
\label{5.20}
\end{equation}
Let $u_2:[0,\rho_r]\rightarrow(-\infty, u_r]$ is a $C^{1}$ function, parameterized branch of $R_2$ satisfying \eqref{5.20} and can be written as
\begin{equation}
u_{2}(\rho)-u_r=\int_{\rho}^{\rho_r} \sqrt{\frac{\epsilon p^{'}(\xi)}{\xi}} d\xi.
\end{equation}
Since $\rho \in [0,\rho_r]$ is bounded and $p$ is increasing, we have $u_2(\rho) \rightarrow u_r$ as $\epsilon \rightarrow 0$ increasingly.
Since $u_l<u_r$, by the above calculation one can see $u_1(0)< u_2(0)$ for small $\epsilon$.
In this case the complete solution is the following:
\begin{equation}
u^{\epsilon}=\left\{
	\begin{array}{lllll}
		u_l  & \mbox{if } x < \lambda_1(u_l,\rho_l)t \\
		R^{u}_{1}(x/t)(u_l,\rho_l) & \mbox{if } \lambda_1(u_l,\rho_l)t< x < \lambda_1(u^{*(1)}_{\epsilon},0)t \\
                    x/t &\mbox{if } \lambda_1(u^{*(1)}_{\epsilon},0)t < x <  \lambda_2(u^{*(2)}_{\epsilon},0)t \\
                     R^{u}_{2}(x/t)(u^{*(2)}_{\epsilon},0) & \mbox{if }  \lambda_2(u^{*(2)}_{\epsilon},0)t <x< \lambda_2(u_r,\rho_r)t  \\
                      u_r                & \mbox{if } x>\lambda_2(u_r,\rho_r)t.
	\end{array}
\right.
\end{equation}
and
\begin{equation}
\rho^{\epsilon}=\left\{
	\begin{array}{lllll}
		\rho_l  & \mbox{if } x < \lambda_1(u_l,\rho_l)t \\
		R^{\rho}_{1}(x/t)(u_l,\rho_l) & \mbox{if } \lambda_1(u_l,\rho_l)t< x < \lambda_1(u^{*(1)}_{\epsilon},0)t \\
                    0 &\mbox{if } \lambda_1(u^{*(1)}_{\epsilon},0)t < x <  \lambda_2(u^{*(2)}_{\epsilon},0)t \\
                     R^{\rho}_{2}(x/t)(u^{*(2)}_{\epsilon},0) & \mbox{if }  \lambda_2(u^{*(2)}_{\epsilon},0)t <x< \lambda_2(u_r,\rho_r)t  \\
                      \rho_r                & \mbox{if } x>\lambda_2(u_r,\rho_r)t.
	\end{array}
\right.
\end{equation}
Where $R^u_1(.)$, $R^{\rho}_1(.)$, $R^u_2(.)$, $R^{\rho}_2(.)$ are defined above.
\\\\
Now it remains to find the limit of $(u^{\epsilon},\rho^{\epsilon})$ as $\epsilon \rightarrow 0$.
Since $u^{*(1)}_{\epsilon}=u_{1}(0)$, we have  $u^{*(1)}_{\epsilon}\rightarrow u_l$ and in the same way $u^{*(2)}_{\epsilon} \rightarrow u_r$ as $\epsilon \rightarrow 0$.
So after passing the limit, we get
\begin{equation}
u(x,t)=\left\{
	\begin{array}{llll}
		u_l  & \mbox{if } x < u_lt \\
		 x/t &\mbox{if } u_lt < x <  u_rt \\
                      u_r                & \mbox{if } x> u_rt 
	\end{array}
\right.
\label{e5.24}
\end{equation}
and
\begin{equation}
\rho(x,t)=\left\{
	\begin{array}{llll}
		\rho_l  & \mbox{if } x < u_lt \\
		 0 &\mbox{if } u_lt < x <  u_rt \\
                      \rho_r                & \mbox{if } x> u_rt .
	\end{array}
\right.
\label{e5.25}
\end{equation}

\begin{remark}
In the solution of $u^{\epsilon}$ we are artificially filling the state $ \lambda_1(u^{*(1)}_{\epsilon},0)t < x <  \lambda_2(u^{*(2)}_{\epsilon},0)t$ by the function $x/t$ as this gives the  least total variation.
\end{remark}
\begin{remark}
The vanishing pressure limit exists  and equal to \eqref{eq3.7}-\eqref{eq3.8} for the case $u_l > u_r$  and   \eqref{e5.24}-\eqref{e5.25} for the case $u_l \leq u_r$ 
with any pressure $p(\rho)$  such that $p(\rho)$  and $p^{\prime}(\rho)$ are increasing. The proofs given here will valid for this case without any change. 
\end{remark}

\section{Another perturbation: adding $\epsilon>0$ in the flux function}
In this short section we propose a different approximation to the system \eqref{e1.1} mentioned earlier in the introduction. The flux function $f(u)=\frac{u^2}{2}$ of the \eqref{e1.1}  of the first function is replaced by $f(u, \epsilon)=\frac{(u+\epsilon)^2}{2}$. This makes the system strictly hyperbolic and is a perturbed system of \eqref{e1.1}.

Let us recall the system \eqref{e1.1}
\begin{equation*}
\begin{aligned}
u_t +(\frac{u^2}{2})_x &=0\\
\rho_t +(\rho u)_x&=0,
\end{aligned}
\end{equation*}
with initial condition
\begin{equation*}
u(x,0)=u_{0}(x),\,\, \rho(x,0)=\rho_0 (x).
\end{equation*}
Perturbed version of the above system can be written as 
\begin{equation}
\begin{aligned}
u_t +(\frac{(u+\epsilon)^2}{2})_x &=0\\
\rho_t +(\rho u)_x&=0.
\end{aligned}
\label{e6.1}
\end{equation}
 with the following Riemann type initial data:
\begin{equation}
\begin{pmatrix}
         u_0 (x)   \\
            \rho_0 (x) \\
         \end{pmatrix}
   =
\begin{cases}
\begin{pmatrix}
         u_l  \\
            \rho_l \\
         \end{pmatrix},\,\,\,\,\,\,\, \textnormal{if}\,\,\,\,\,\,\  x<0\\
 \begin{pmatrix}
         u_r   \\
            \rho_r \\
         \end{pmatrix},\,\,\,\,\,\,\, \textnormal{if}\,\,\,\,\,\,\  x>0.
\end{cases}
\label{e6.2}
\end{equation}
Our aim is to obtain the distributional limit of the solutions $u^{\epsilon}$ and $\rho^{\epsilon}$  of \eqref{e6.1} as $\epsilon$ tends to zero.\\
The eigenvalues and the eigenvectors for the system \eqref{e6.1} are the following:\\
$\lambda_1(u)=u$ and the corresponding eigenvector is $r_1(u)=(0,1)$\\
and\\
$\lambda_2(u)=u+\epsilon$ and the corresponding eigenvector is $r_1(u)=(1,\rho/\epsilon).$\\
Again, $\nabla \lambda_1(u).r_1(u)=0$ and $\nabla \lambda_2(u).r_2(u)=1$. So the first characteristics field is linearly de-generate and the second characteristics field is 
genuinely nonlinear. Let's find explicitly the rarefaction family.\\\\
\textbf{1st-Rarefaction family}: 1st-rarefaction family is the solution of the ODE;
\begin{equation}
\dot{w}(\xi)=r_1(w(\xi)),\,\,\,\,\,  w(\lambda_1(u_l,\rho_l))=(u_l,\rho_l),
\end{equation}
where $w(\xi)=(w_1(\xi),w_2(\xi)).$  So, solving the following pair of ODE:
\begin{equation}
\begin{aligned}
\dot{w_1}(\xi)=0,\,\,\,\,w_1(u_l)=u_l\\
\dot{w_2}(\xi)=1,\,\,\,\,w_2(u_l)=\rho_l,
\end{aligned}
\end{equation}
we get the 1st-rarefaction family as
\begin{equation}
R_1(\xi)=(u_l,\,\,\,\, \xi+\rho_l-u_l).
\end{equation}

\textbf{2nd-Rarefaction family}: 2nd-rarefaction family is the solution of the ODE;
\begin{equation*}
\dot{w}(\xi)=r_2(w(\xi)),\,\,\,\, w(\lambda_2(u_l,\rho_l))=(u_l,\rho_l)
\end{equation*}
where $w(\xi)=(w_1(\xi),w_2(\xi))$\\
That gives the following system of ODEs with initial conditions.
\begin{equation*}
\begin{aligned}
\dot{w_1}(\xi)=1,\,\,\,\,w_1(u_l+\epsilon)=u_l\\
\dot{w_2}(\xi)=\frac{w_2(\xi)}{\epsilon},\,\,\,\,w_2(u_l+\epsilon)=\rho_l
\end{aligned}
\end{equation*}
Solving this pair of ODE, we get the 2nd-Rarefaction family as
\begin{equation}
R_2(\xi)=(\xi-\epsilon, \,\,\,\, \rho_l.\exp(\frac{\xi-(u_l+\epsilon)}{\epsilon})).
\label{e6.6}
\end{equation}
Since the first characteristics field is  linearly degenerate, the 1st-Shock curve and the 1st-Rarefaction curve will coincide, i.e.,  $R_1(\xi)=S_1(\xi)$.\\


\textbf{2nd admissible shock curve}: Second admissible shock curve passing through $( u_l, \rho_l)$ is given by:
\begin{equation}
\rho(u)=\frac{\rho_l (\frac{u-u_l}{2}+\epsilon)}{\epsilon-\frac{u- u_l}{2}},\,\,\, u< u_l, \,\,\, u_l -u  \leq \epsilon.
\end{equation}\\
Main result of this section is the following.

\begin{theorem}
Consider the perturbed Riemann problem (6.3) with the initial data (6.4) such that $u_l<u_r$, then it has a unique weak solution $(u^\epsilon,\rho^\epsilon)$ whose limit as $\epsilon$ $\rightarrow 0 $ is given by 
\begin{equation}
u(x,t)=\left\{
	\begin{array}{llll}
		u_l  & \mbox{if } x < u_lt \\
		 x/t &\mbox{if } u_lt < x <  u_rt \\
                      u_r                & \mbox{if } x> u_rt 
	\end{array}
\right.
\end{equation}
and
\begin{equation}
\rho(x,t)=\left\{
	\begin{array}{llll}
		\rho_l  & \mbox{if } x < u_lt \\
		 0 &\mbox{if } u_lt < x <  u_rt \\
                      \rho_r                & \mbox{if } x> u_rt .
	\end{array}
\right.
\label{e6.9}
\end{equation}
If $u_l \geq  u_r$, then then the limit of the solution $(u^\epsilon,\rho^\epsilon)$  of $(6.3)$   as $\epsilon$ tends to zero is given by

\begin{equation}
(u,\rho)(x,t)= \begin{cases}
	(u_l,\rho_l) \,\,\,\,\, \textnormal{if }\,\,\, u_l t < x <  (\frac{u_l+u_r}{2})t \\
(\frac{u_l + u_r}{2}, \frac{(u_l - u_r)(\rho_l + \rho _r)}{2}\delta_{x=(\frac{u_l+u_r}{2})t } ),\,\,\,\,
\textnormal{if } \,\,\,  x = (\frac{u_l+u_r}{2})t.\\
                    (u_r,\rho_r)\,\,\, \textnormal{if}\,\,\,  x>  (\frac{u_l+u_r}{2})t.
	\end{cases}
\label{e6.10}
\end{equation}
\end{theorem}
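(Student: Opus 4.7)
The plan is to handle the two cases separately, because the perturbed system itself admits a BV Riemann solution only in Case~1; in Case~2 a delta-shock must be built into the solution of \eqref{e6.1} before the limit can be taken.

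\textbf{Case $u_l < u_r$.} First I would construct the explicit solution of \eqref{e6.1}--\eqref{e6.2} as a 1-contact followed by a 2-rarefaction, using the wave curves already computed in this section. Since the 1-field is linearly degenerate, the contact preserves the $u$-component, forcing the intermediate state to have the form $(u_l,\rho^\ast_\epsilon)$; plugging $(u_l,\rho^\ast_\epsilon)$ into \eqref{e6.6} as the left state and demanding that the resulting fan reach $(u_r,\rho_r)$ at $\xi = u_r + \epsilon$ gives
\[\rho^\ast_\epsilon = \rho_r \exp\!\bigl(-(u_r - u_l)/\epsilon\bigr),\]
which decays to $0$. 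Uniqueness of this wave pattern is standard since the 2-field is genuinely nonlinear and $u_r > u_l$ selects a rarefaction. For the limit I would observe that the contact sits at $x = u_l t$, the middle constant state occupies the shrinking strip $u_l < x/t < u_l+\epsilon$, and along the fan a direct substitution yields $\rho(\xi) = \rho_r\exp((\xi - u_r - \epsilon)/\epsilon)$, which tends to $0$ pointwise on $u_l < x/t < u_r$ since $x/t - u_r < 0$ is divided by $\epsilon\to 0^+$; meanwhile $u = \xi - \epsilon \to x/t$ there. This yields the stated limit.

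\textbf{Case $u_l \geq u_r$.} If $u_l = u_r$ the constant pair is the trivial solution. For $u_l > u_r$ the 2-admissible shock curve through $(u_l,\rho_l)$ only reaches $u$-values in $(u_l - \epsilon, u_l)$, so for $\epsilon < u_l - u_r$ no BV solution exists and a delta-wave must be introduced (as flagged in the introduction). I would make the ansatz
\[u^\epsilon = u_l + [u]\,H(x - c_\epsilon t), \qquad \rho^\epsilon = \rho_l + [\rho]\,H(x - c_\epsilon t) + w_\epsilon(t)\,\delta_{\{x=c_\epsilon t\}},\]
assigning $u^\epsilon$ the still-to-be-determined value $u_{\delta,\epsilon}$ on the curve. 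The first PDE carries no delta, so the ordinary Rankine--Hugoniot relation immediately gives $c_\epsilon = (u_l + u_r + 2\epsilon)/2$. Substituting the ansatz into the second PDE and collecting the $\delta$ and $\delta'$ parts produces two conditions: the $\delta'$ coefficient forces $u_{\delta,\epsilon} = c_\epsilon$ (this is precisely the non-Volpert prescription promised in the introduction), while the $\delta$ coefficient yields the ODE
\[w_\epsilon'(t) = c_\epsilon[\rho] - [\rho u], \qquad w_\epsilon(0) = 0,\]
which integrates to a linear function of $t$. Letting $\epsilon \to 0$ gives $c_\epsilon \to (u_l + u_r)/2$ and $w_\epsilon(t) \to \tfrac{1}{2}(u_l - u_r)(\rho_l + \rho_r)\,t$, in agreement with \eqref{e6.10}.

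The main obstacle will be Case~2: one has to verify that the non-Volpert assignment $u_{\delta,\epsilon} = c_\epsilon$ is \emph{forced} by the distributional identity rather than imposed by hand, and that the resulting measure-valued pair $(u^\epsilon,\rho^\epsilon)$ is the natural weak solution of \eqref{e6.1}, ideally by matching it with the vanishing viscosity limit of the same approximation. By contrast, Case~1 reduces to a clean computation with exponentials in $1/\epsilon$ once the intermediate density $\rho^\ast_\epsilon$ has been pinned down.
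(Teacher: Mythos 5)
Your route is essentially the paper's own. In the case $u_l<u_r$ you build exactly the same wave pattern (a 1-contact, forced by linear degeneracy, followed by a 2-rarefaction) with the same intermediate density $\rho^\ast_\epsilon=\rho_r\exp\bigl((u_l-u_r)/\epsilon\bigr)$ obtained from \eqref{e6.6}, and you pass to the limit by the same observations: the contact stays at $x=u_lt$, the middle state lives on the shrinking wedge $u_lt<x<(u_l+\epsilon)t$, and along the fan $u=\xi-\epsilon\to x/t$ while $\rho=\rho_r\exp\bigl((\xi-u_r-\epsilon)/\epsilon\bigr)\to 0$ pointwise (with the uniform bound $\rho\le\rho_r$ this upgrades to distributional convergence), giving \eqref{e6.9}. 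In the case $u_l>u_r$ your single-discontinuity ansatz reproduces the paper's measure-valued solution \eqref{e6.14} exactly: the first equation gives $c_\epsilon=\frac{u_l+u_r}{2}+\epsilon$, the assignment of $u$ on the curve is $u_{\delta,\epsilon}=c_\epsilon$, and the weight is $w_\epsilon(t)=\bigl[\tfrac{(u_l-u_r)(\rho_l+\rho_r)}{2}+\epsilon(\rho_r-\rho_l)\bigr]t$, whose limit gives \eqref{e6.10}. Here you are in fact slightly more complete than the source: the paper merely asserts that \eqref{e6.14} ``can be easily checked'' to solve \eqref{e6.1}, whereas your extraction of $u_{\delta,\epsilon}=c_\epsilon$ from the $\delta'$-coefficient and of $w_\epsilon'(t)=c_\epsilon[\rho]-[\rho u]$ from the $\delta$-coefficient supplies that verification and shows the non-Volpert prescription is forced, not imposed.

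The one genuine slip is your treatment of the sub-case $u_l=u_r$: the constant pair is the solution only when $\rho_l=\rho_r$, and taken literally it would contradict the claimed limit \eqref{e6.10}, in which $\rho$ still jumps from $\rho_l$ to $\rho_r$ across $x=u_lt$. For $u_l=u_r$, $\rho_l\neq\rho_r$ the correct ($\epsilon$-independent) solution is the contact $u\equiv u_l$, $\rho=\rho_l+(\rho_r-\rho_l)H(x-u_lt)$; the paper instead records the degenerate two-discontinuity solution \eqref{e6.13} with intermediate state $(u_l,\rho^\ast)$ and lets $\epsilon\to 0$. The fix costs you one line, since your own delta ansatz degenerates correctly there: $c_\epsilon=u_l+\epsilon$ and $w_\epsilon(t)=\epsilon(\rho_r-\rho_l)t\to 0$, so the limit \eqref{e6.10} with vanishing delta weight is recovered; but as written your Case 2 opening sentence is false whenever $\rho_l\neq\rho_r$.
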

\begin{proof}
{\bf{Case 1: $u_l < u_r$}}: 
 The state $(u_l, \rho_l)$ can be joined to $(u_l,\rho^*_\epsilon)$ by 1-shock curve  and $(u_l,\rho^*_\epsilon)$ can be joined to $(u_l, u_r)$ by 2-rarefaction curve.
Then by \eqref{e6.6}, $(u_l,\rho^*_\epsilon)$ will satisfy the following equations.
\begin{equation*}
u_l = \xi-\epsilon,\,\,\,  \rho_r= \rho^*_\epsilon.\exp(\frac{\xi-(u_l+\epsilon)}{\epsilon}).
\end{equation*}
Which yields 

\begin{equation*}
 \xi=u_l + \epsilon,\,\,\,  \rho^*_\epsilon=\rho_r.\exp(\frac{(u_l-u_r)}{\epsilon}).
\end{equation*}


 So the solution for the perturbed problem is given by:
\begin{equation}
(u^{\epsilon},\rho^{\epsilon})(x,t)=\left\{
	\begin{array}{lllll}
		(u_l,\rho_l)  & \mbox{if }\,\,\,\, x < \lambda_1(u_l,\rho_l)t \\
		 (u_l,\rho^*_\epsilon) &\mbox{if }\,\,\, \lambda_1(u_l,\rho_l)t < x < \lambda_2(u_l,\rho^*_\epsilon)t \\
              R_2(\xi)(u_l,\rho_l)                    &\mbox{if }\,\,\,\lambda_2(u_l,\rho^*_\epsilon)< x< \lambda_2(u_r,\rho_r)t \\
              (u_r,\rho_r)       &\mbox{if}\,\,\,\, x>\lambda_2(u_r,\rho_r)t,
	\end{array}
\right.
\end{equation}

where $\lambda_2(R_2(\xi)(u_l,\rho_l)=x/t$, i.e, $\xi=x/t$.

Therefore the solution is given by
\begin{equation}
(u^{\epsilon},\rho^{\epsilon})(x,t)=\left\{
	\begin{array}{lllll}
		(u_l,\rho_l)  & \mbox{if } x < u_lt \\
		 (u_l,\,\,\,\, \rho_r.\exp(\frac{(u_l-u_r)}{\epsilon})) &\mbox{if } u_lt < x <(u_l+\epsilon)t \\
               (x/t-\epsilon, \,\,\,\, \rho^*_\epsilon.\exp(\frac{x/t-(u_r+\epsilon)}{\epsilon}))                    &\mbox{if }(u_l+\epsilon)t< x<(u_r+\epsilon)t \\
              (u_r,\rho_r)       &\mbox{if}\,\,\, x>(u_r+\epsilon)t.
	\end{array}
\right.
\end{equation}
Now as $\epsilon\rightarrow0$ gives the limit \eqref{e6.9} in the sense of distribution.

{\bf{Case 2: $u_l =u_r$}}:
Solutions for the Riemann problem when  $u_l \leq u_r, \,\,\, u_r-u_l  \leq \epsilon$  are given by the following:  The state $(u_l, u_r)$ is connected to $(u_l, \rho^ *)$ by 1st shock family and 
 $(u_l, \rho^ *)$  to  $(u_r, \rho_r)$ by 2nd shock family. Here $ \rho^ *= \frac{\rho_l (\frac{u_r-u_l}{2}+\epsilon)}{\epsilon-\frac{u_r- u_l}{2}}$.
\begin{equation}
(u^{\epsilon},\rho^{\epsilon})(x,t)=\left\{
	\begin{array}{lllll}
		(u_l,\rho_l)  & \mbox{if } x < u_l t \\
		 (u_l,\rho^*) &\mbox{if } u_l t < x <  (\frac{u_l+u_r}{2}+\epsilon)t \\
                    (u_r,\rho_r)   &\mbox{if}\,\,\,  x>  (\frac{u_l+u_r}{2}+\epsilon)t.
	\end{array}
\right.
\label{e6.13}
\end{equation}

{\bf{Case 3: $u_l > u_r$}}:
When   $u_l< u_r, \,\,\, u_r -u_l > \epsilon$, then the solution can not be a  function of bounded variation. We give the solution in the class of  measures by defining $u$ suitably 
along the discontinuity of the first component $u$. In this case the solution $(u, \rho)$  is given by the following.

\begin{equation}
\begin{aligned}
&(u^{\epsilon},\rho^{\epsilon})(x,t)\\&= \begin{cases}
	(u_l,\rho_l) \,\,\,\,\,\,\,\,\,\, \textnormal{if }\,\,\, u_l t < x <  (\frac{u_l+u_r}{2}+\epsilon)t \\
(\frac{u_l + u_r}{2}+\epsilon, \,\,\, \frac{(u_l - u_r)(\rho_l + \rho _r)}{2}+\epsilon(\rho_r - \rho_l)\delta_{x=(\frac{u_l+u_r}{2}+\epsilon)t } ),\,\,\,\,
\textnormal{if }   x = (\frac{u_l+u_r}{2}+\epsilon)t.\\
                    (u_r,\rho_r)\,\,\,\,\,\,\,\, \textnormal{if}\,\,\,  x>  (\frac{u_l+u_r}{2}+\epsilon)t.
	\end{cases}
\end{aligned}
\label{e6.14}
\end{equation}
It can be easily checked that $(u^{\epsilon},\rho^{\epsilon})$ satisfies the equation \eqref{e6.1}-\eqref{e6.2}. The limit of $(u^{\epsilon},\rho^{\epsilon})$ in the equation \eqref{e6.13}-\eqref{e6.14} as $\epsilon$ tends to zero is \eqref{e6.10} in the sense of distribution. This completes the proof.

\end{proof}


\begin{thebibliography}{0}
\bibitem{ba1}
A. Bressan, Hyperbolic Systems of Conservation Laws, The One-Dimensional Cauchy Problem, Oxford University Press, 2005.

\bibitem{Cemp} S. Caprino, R. Esposito, R. Marra and M. Pulvirenti, Hydro-dynamics limits of the Vlasov equation,  Comm. Partial. Diff. Eqs.,
{\bf{18}}(1993) 805-820.

\bibitem{Li1}
G.Q, Chen, H. Liu, Formation of $\delta$-shocks and vaccum states in the vanishing pressure limit of solution to the Euler equation for isentropic fluids,
SIAM J. Math. Anal. {\bf{34}} (2003), no.4, 925-938.

\bibitem{co1}
J.F. Colombeau, New Generalized Functions and Multiplication of Distributions: A Graduate Course, Application to Theoretical and Numerical Solutions of Partial Diﬀerential Equations, Lyon, 1993.


\bibitem{da1}
M. Dafermos, Hyperbolic Conservation Laws in Continuum Physics, Springer, 2016.


\bibitem{di1} DiPerna, Ronald J. Global solutions to a class of nonlinear hyperbolic systems of equations. Comm. Pure Appl. Math. 26 (1973), 1-28.


\bibitem{Ea} S. Earnshaw, On the mathematical theory of sounds, Philos. Trans., {\bf{150}}(1858), 1150-1154. 


 \bibitem{h1}
E. Hopf, The partial differential equation $u_t +u u_x =\mu u_{xx}$,
Comm. Pure Appl. Math., {\bf 13} (1950) 201-230

\bibitem{j2}
K. T. Joseph, A Riemann problem whose viscosity solution
contain $\delta$- measures, Asym. Anal., {\bf7}(1993) 105-120 .

\bibitem{ma2}
 K. T. Joseph, Manas R. Sahoo, Vanishing viscosity approach to a system of conservation laws admitting $\delta^{\prime \prime}$-waves, Commun. Pure Appl. Anal. {\bf{12}} (2013), no. 5, 2091-2118.
 
 
 
 \bibitem{k1}
D.J. Korchinski, Solution of a Riemann problem for a system of conservation laws possessing no classical weak solution, thesis, Adelphi Univers
ity,1977


\bibitem{le1}
P.G. LeFloch, An existence and uniqueness result for two nonstrictly 
hyperbolic systems, in  Nonlinear Evolution Equations that change 
type, (eds) Barbara Lee Keyfitz and Michael Shearer, {\bf IMA} {\bf 27}
(1990) 126-138.


\bibitem{lu1}
 Lu, Yun-Guang Convergence of viscosity solutions to a nonstrictly hyperbolic system. Advances in nonlinear partial differential equations and related areas (Beijing, 1997), 250-266, World Sci. Publ., River Edge, NJ, 1998.
 
 \bibitem{d1}
G.Dal Maso, P.G.LeFloch and F.Murat, Definition and weak stability of 
nonconservative products, J. Math. Pures Appl. {\bf 74} (1995) 483-548.
 
 
\bibitem{n1}
 Darko  Mitrovic, Marko  Nedeljkov,  Delta shock waves as a limit of shock waves, J. Hyperbolic Differ. Equ. 4 (2007), no. 4, 629-653. 
 
 
\bibitem{ob2}
M. Oberguggenberger,  Multiplication of Distributions and Applications to PDEs, Pittman Res. Notes Math., vol. 259, Longman, Harlow, 1992. 

\bibitem{o1}
M. Oberguggenberger, Case study of a nonlinear, nonconservative, nonstrictly hyperbolic system, Nonlinear Anal. 19 (1992) 53-79.

\bibitem{Oe1} K. Oelschlager, On the connection between Hamiltonian many-particle systems and the hydrodynamical equation, Arch. Rat. Mech. Anal., {\bf{115}}(1991), 297-310. 

\bibitem{Oe2} K. Oelschlager, An integro- differential equation modelling a New-tonian dynamics and its scaling limit,  Arch. Rat. Mech. Anal., {\bf{137}}(1997), 99-134.

\bibitem{p1}
E.Yu. Panov, V.M. Shelkovich,
$\delta'$-shock waves as a new type of solutions to system of conservation laws, J. Differential Equations 228 (2006) 49-86.

\bibitem{ma1}
 Manas R. Sahoo, Generalized solution to a system of conservation laws which is not strictly hyperbolic. J. Math. Anal. Appl. {\bf{432}} (2015), no. 1, 214-232

\bibitem{v1}
A.I. Volpert, The space BV and quasi-linear equations,  Math USSR Sb 
{\ bf 2} (1967) 225-267.


\bibitem{Wh}G. B. Whitham, Linear and  Noninear waves, John Wiley and Sons, New York, 1973.

\bibitem{z1}
Ya. Zeldovich, Gravitational instability: an approximate theory for 
large density perturbations, Astron. Astrophys., {\bf 5} (1970) 84-89.

.











 


\
\end{thebibliography}
\end{document}